 \newtheorem{theorem}{Theorem}[section]
   \newtheorem{corollary}[theorem]{Corollary}
  \newtheorem{lemma}[theorem]{Lemma}
    \newtheorem{conjecture}[theorem]{Conjecture}
\newenvironment{theorem*}[1]
  {\innercustomthm}
  {\endinnercustomthm}
\newenvironment{lemma*}[1]
  {\innercustomlem}
  {\endinnercustomlem}
    \theoremstyle{definition}
\newtheorem{definition}{Definition}[section]
\theoremstyle{remark}    
    \newtheorem{remark}{Remark}[section]
  \newcommand{\G}{\Gamma}
    \newcommand{\AG}{\ensuremath{{A_{S}}}}
        \newcommand{\WG}{\ensuremath{{W_{S}}}}
          \newcommand{\CPG}{\ensuremath{{\mathcal{CP}_{A_S}}}}
          \newcommand{\Cayley}{\ensuremath{{\Gamma(A_S,\mathcal{N})}}}
\tikzset{vertex/.style={circle, draw, fill=black!50},inner sep=0pt, minimum width=4pt}
\title{Parabolic subgroups in FC type Artin groups}
\author{Rose Morris-Wright}
\begin{document}

\maketitle 

\begin{abstract}
Parabolic subgroups are the building blocks of Artin groups. This paper extends previous results, known only for parabolic subgroups of finite type Artin groups, to parabolic subgroups of FC type Artin groups. We show that the class of finite type parabolic subgroups is closed under intersection. We also study an analog of the curve complex for mapping class group constructed using parabolic subgroups. We extend the construction of the complex of parabolic subgroups to FC type Artin groups. We show that this simplicial complex is, in most cases, infinite diameter and conjecture that it is $\delta-$hyperbolic.
\end{abstract}

\section{Introduction}

An Artin group, $\AG$, is a group with generating set $S$ and relations similar to those of a braid group.
 A \textit{standard parabolic subgroup}, $A_T$, is a subgroup generated by a subset  $T\subset S$  and more generally a \textit{parabolic subgroup} is a conjugate of a standard parabolic subgroup, $gA_T g^{-1}$ for some $g\in \AG$. Parabolic subgroups can be thought of as the building blocks of Artin groups and so studying these subgroups and how they fit together within the overarching Artin group can provide valuable insights about Artin groups. 

Just as each braid group is associated to a symmetric group, each Artin group is associated to a Coxeter group. If the Coxeter group associated to a given Artin group is finite then the Artin group is said to be of \textit{finite type}. Finite type Artin groups are the most well understood Artin groups while 
infinite type Artin groups are much more mysterious.  The Deligne complex is a geometric construction that allows some results on finite type Artin groups to be generalized to infinite type Artin groups. The Deligne complex is most useful when it is CAT(0), and this occurs precisely in a special subclass of Artin groups known as FC type Artin groups.
 This paper will look at some facts about parabolic subgroups of finite type Artin groups and generalize them to FC type Artin groups. 

Some algebraic properties of parabolic subgroups of Artin groups are known. In \cite{VanderLek1983}, van der Lek shows that each parabolic subgroup is itself isomorphic to an Artin group.  In \cite{Paris1997}, Paris restricts to parabolic subgroups within finite type Artin groups and gives results about their centralizers and quasi-centralizers, including an algorithm to determine if two given parabolic subgroups are conjugate. In \cite{Godelle2003} and \cite{Godelle2007}, Godelle expands upon Paris's results and generalizes them to the case where the Deligne complex has a CAT(0) metric. Godelle explicitly computes the normalizer for a parabolic subgroup and shows that if $P$ and $Q$ are parabolic subgroups of an FC type Artin group and $Q\subset P$ then $Q$ is also a parabolic subgroup of $P$. Yet even this seemingly simple result is unknown for more general Artin groups. 

Another seemingly simple question is whether the set of parabolic subgroups is closed under intersection. In \cite{VanderLek1983}, van der Lek also shows that if $R,T$ are subsets of $S$ then $A_R\cap A_T=A_{R\cap T}$, but a more general result about the intersections of nonstandard parabolic subgroups has been much harder to establish. In \cite{Cumplido2017}, Cumplido et al show that the intersection of two parabolic subgroups in a finite type Artin group is also a parabolic subgroup. Their proof relies heavily on the Garside structure of the finite type Artin group, an algebraic construction that is difficult to generalize beyond the finite type case. 
This paper uses geometric techniques to generalize their result. 

\begin{theorem*}{\ref{Thrm:Intersections}}
Suppose that $\AG$ is an  FC type Artin group. Given two finite type parabolic subgroups, $P$ and $Q$, the intersection $P\cap Q$ is also a finite type parabolic subgroup. 
\end{theorem*}



Another application of parabolic subgroups is to create a simplicial complex for Artin groups which is analogous to the curve complex for mapping class groups. By thinking of a braid group as the mapping class group of the punctured disc, we observe a bijection between the simple closed curves in the disc and proper irreducible parabolic subgroups of the braid group. Simple closed curves in the disc are used to construct the curve complex for the mapping class group. Analogously, we may use parabolic subgroups to construct a simplicial complex for a given Artin group. If the Artin group is a braid group, this simplicial complex will be isomorphic to the curve complex of the punctured disc, with isomorphism via the bijection between simple closed curves and proper irreducible parabolic subgroups. 

In the case of the mapping class group of a surface, the associated curve complex is (in all but a few sporadic cases) connected, infinite diameter, and $\delta-$hyperbolic \cite{Masur1998}. The curve complex is instrumental in the proofs of many results about mapping class groups, including Nielsen–Thurston classification and the fact the the group is generated by finitely many Dehn twists. The curve complex is also the classical example of a hierachically hyperbolic space, and as such, provides a way of studying the mapping class group and specifically any behavior that does not come from the automorphisms of a proper subsurface \cite{Behrstock2017, Sisto2017}. The hope is to establish an analogous complex for Artin groups with similar properties and eventually to show that it has a similar hierarchical structure. 

In \cite{Cumplido2017}, Cumplido et al  consider a finite type Artin group $\AG$ and construct a simplicial complex, $\CPG$, called the \textit{complex of parabolic subgroups}, which generalizes the curve complex for braid groups. In \cite{CalvezWiest2019}, Calvez and Wiest relate this complex to other known complexes associated to finite type Artin groups and show that for irreducible finite type Artin groups, this complex has infinite diameter. 

In this paper, we extend the construction  of the complex of parabolic subgroups to FC type Artin groups. This involves proving a technical lemma that establishes the adjacency conditions for the vertices in the complex (See Lemma \ref{thrm: Define Complex}).

 We also show that in the FC type case, the complex of parabolic subgroups is connected with infinite diameter, conditional on some mild conditions on $\G$, the graph which encodes the relations between the generators in $S$. 
 
 \begin{lemma*}{\ref{lem_connected}}
Let $\AG$ be an irreducible FC type Artin group. The complex of parabolic subgroups is connected if and only if $\Gamma$ is connected and has at least three vertices.
\end{lemma*}

\begin{theorem*}{\ref{thrm:inf_diam}}
Suppose that $\AG$ is an irreducible FC type Artin group. Further suppose that the defining graph $\G$ is not a join and has at least two vertices. Then the complex of parabolic subgroups has infinite diameter. 
\end{theorem*}

The eventual goal is to show that the complex of parabolic subgroups is hyperbolic, which could lead to a hierarchically hyperbolic structure for Artin groups, similar to the curve complex for mapping class groups. There are other spaces associated to Artin groups which are known to be hyperbolic. For example, when the Deligne complex is CAT(0), its intersection graph will automatically be hyperbolic. However, the complex of parabolic subgroups is a complex where the stabilizers of the vertices are always finite type parabolic subgroups, that is the stabilizers are also Artin groups and so also have associated complexes. Other known hyperbolic complexes associated to a given Artin group do not exhibit this behavior and this is exactly the type of nesting structure required for constructing a hierarchically hyperbolic space. 


Section \ref{sec:Background and Notation} defines terms needed later on and introduces the Deligne complex, which will be the main geometric tool used in this paper. Section \ref{sec: Intersections} shows that the set of parabolic subgroups is closed under intersections. Section \ref{sec:complex} constructs the complex of parabolic subgroups. Finally, in Section \ref{sec:Infinite diameter}  we show that the complex of parabolic subgroups has is connected and infinite diameter(under mild assumptions). 




\section{Background on Artin groups and Deligne complexes}\label{sec:Background and Notation}

Given a finite simplicial  graph $\G$ with set of vertices $S=\{s_1\dots s_n\}$ and with an edge between $s_i$ and $s_j$ labeled by an integer $m_{ij}\geq 2$, we define the \emph{Artin group}, $\AG$,  associated to $\G$ to be the group with presentation
$$A=\langle s_1,\dots,s_n\ \mid\ 
\underbrace{s_is_js_i\dots}_{m_{ij}\text{ terms}}=\underbrace{s_js_is_j\dots}_{m_{ij}\text{ terms}}\ 
\textrm{ for all } i\neq j\rangle \,.$$

If there is no edge in $\Gamma$ between $s_i$ and $s_j$ we say that $m_{ij}=\infty$. The Coxeter group, $\WG$, associated to $\G$ is the group obtained from $\AG$ by adding the relations $s_i^2=1$ for all $i.$ If $\WG$ is a finite group, then we say that $\AG$ is finite type. 

For each subset $T\subset S$ we consider the subgroup generated by $T$. This is the Artin group corresponding to the full subgraph of $\G$ spanned by $T$ \cite{VanderLek1983} and is denoted by $A_T$. Such groups are known as \textit{standard parabolic subgroups} and more generally their conjugates are known as \textit{parabolic subgroups}. We say that a parabolic subgroup $gA_Tg^{-1}$ is \textit{finite type} if $T$ generates a finite type subgroup. 

An Artin group is \textit{irreducible} if it cannot be decomposed as a direct product of Artin groups. Am Artin group $\AG$ is reducible exactly  $S=S_1\sqcup S_2$ such that every $s_1\in S_1$ is commutes with every $s_2\in S_2$. In terms of the graph $\G$ reducible Artin groups correspond to joins of subgraphs where every edge between the subgraphs is labeled by 2. We will focus primarily on irreducible Artin groups.  

The moniod $\AG^+$ is the monoid generated by elements of $S$ and by \cite{Paris2002} it injects into the Artin group $\AG$. There is a prefix order on $\AG$ defined by $a\leq b$ if $a^{-1}b\in \AG^+$. When the Artin group $\AG$ is of finite type, the elements of the group form a lattice under this ordering \cite{Dehornoy1999}. 
This lattice is a part of the Garside structure for the group and many of the algebraic results about finite type Artin groups rely on this structure. (For more information about the Garside structure see \cite{DehornoyBook}.)
 For a given subset $T\subset S$, the group element $\Delta_T$ is defined to be the least common multiple of the generators $t\in T$ under the prefix order, and $\Delta_S$ is often denoted $\Delta.$

In a finite type Artin group $\AG$, either $\Delta$ or $\Delta^2$ is an element of the center of $\AG$ \cite{Dehornoy1999}, and if $\AG$ is irreducible then this element generates the center. Given a parabolic subgroup $P=gA_Tg^{-1}$ we define the group element $z_P$ to be the element $g\Delta^k g^{-1}$, where $k$ is the smallest integer such that this element is in the center. Note that if $A_T$ is not a finite type subgroup, then $\Delta_T$ may not exist. In particular, many infinite type Artin groups have trivial center \cite{Charney2018}. As a result this paper will focus on finite type subgroups within infinite type groups so that we may associate to each subgroup $P$ the element $z_P$.

One geometric construction often used to study Artin groups is the Deligne complex.
\begin{definition}
Consider the set 

$$ \mathcal{S}^f= \{ T \subseteq S \mid \textrm{$A_T$  is finite type} \}$$

 The \textit{Deligne complex} is the cube complex whose vertices are cosets $gA_T$, $T \in \mathcal{S}^f$  and for any pair $gA_T \subset gA_{T'}$, the interval $[gA_T, gA_{T'}]$ spans a cube of dimension $|T' \smallsetminus T|$. 
\end{definition}
 
The action of $\AG$ on its cosets induces an action of on the Deligne complex. The Artin group acts cocompactly and by isometries on this cube complex. The stabilizer of a vertex $gA_T$ is the parabolic subgroup $gA_Tg^{-1}$ so we can use the Deligne complex to study parabolic subgroups via its vertex stabilizers. We denote by $gA_\emptyset$ the cosets of the trivial subgroup. These cosets often figure prominently in arguments involving the Deligne complex as they have trivial stabilizer.

An Artin group is \textit{of FC type} if every subset of generators spanning a clique in the graph $\Gamma$ generates a finite type Artin group. FC stands for Flag Complex, as Artin groups of FC type were first introduced in \cite{CharneyDavis1995} where it is shown that FC type Artin groups are exactly those Artin groups whose Deligne complex has CAT(0) cubical metric. The Deligne complex is homotopy equivalent to the universal cover of the hyperplane complement for the associated Coxeter group and as such can be used to show that FC type Artin groups have many desirable properties \cite{Altobelli1998, CharneyDavis1995, Godelle2007}.

There are two main observations about the Deligne complex which we will use repeatedly throughout this paper.

\begin{remark} \label{remark: Delinge complex fixed points}
 The first is that if a point $x$ in the interior of a cube is fixed by the action of an element $g\in \AG$ then $g$ fixes the entire cube. Each vertex is associated with a coset of the subgroup $A_X$ for some $X\subset S$ and the action of the group takes this coset to another coset of $A_X$. Thus there are no elements of the group which act to permute the vertices of a cube. In particular this means that in the FC type case, if an element of the group $g\in \AG$ fixes two vertices $v_1$ and $v_2$, then $g$ must pointwise fix, not only the geodesic between $v_1$ and $v_2$ but also any minimal length edge path between $v_1$ and $v_2$.
\end{remark}

\begin{remark} \label{remark:Deligne hyperlanes}
The second observation is that every hyperplane is the translate of a hyperplane intersecting an edge between the vertex $A_\emptyset$ and a vertex of the form $A_{\{s\}}$ for some $s\in S$. We call such a hyperplane a \textit{hyperplane of type $s$}. The action of $\AG$ on the Deligne complex preserves hyperplane types. Furthermore,  hyperplanes of type $s$ can only intersect hyperplanes  type $t$ for $t$ in the link of $s$ in the defining graph $\G$. In particular if an element $g$ fixes a single point of a hyperplane  of type $s$, $H_s$, then $g$ must send $H_s$ to itself,  as the only hyperplane of type $s$ which intersects $H_s$ is $H_s$ itself. 
\end{remark} 


\section{Intersections of parabolic subgroups} \label{sec: Intersections}

In this section, we consider FC type Artin groups  and show that the intersection of two  finite type parabolic subgroups is a parabolic subgroup. This is a generalization of Theorem 9.5 in \cite{Cumplido2017}.

\begin{theorem}\label{Thrm:Intersections}
Suppose that $\AG$ is an FC type Artin group. Given two finite type parabolic subgroups $P$ and $Q$ the intersection $P\cap Q$ is also a finite type parabolic subgroup. 
\end{theorem}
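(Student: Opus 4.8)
The plan is to use the Deligne complex as the central geometric tool, exploiting the fact that for an FC type Artin group it is CAT(0). The key idea is to associate to each finite type parabolic subgroup a fixed point (or fixed set) in the Deligne complex, and then to use the CAT(0) geometry to understand the intersection. Specifically, a finite type parabolic subgroup $P = gA_Tg^{-1}$ stabilizes the vertex $gA_T$ of the Deligne complex. More usefully, I would bring in the central element $z_P = g\Delta^k g^{-1}$ associated to $P$: since $z_P$ generates (a finite-index subgroup of) the center of $P$, any element commuting with $z_P$ should be closely tied to $P$, and conversely the fixed set of $z_P$ in the Deligne complex should detect $P$. The first step, then, is to establish a clean correspondence between a finite type parabolic subgroup $P$ and the fixed-point set $\mathrm{Fix}(z_P)$ in the Deligne complex, showing that $P$ is exactly the stabilizer of this fixed set, or that $P$ is recovered as the set of elements fixing the same geometric data.

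Granting such a correspondence, the strategy for the intersection is as follows. First I would observe that $P \cap Q$ consists precisely of the elements lying in both $P$ and $Q$, hence fixing both the relevant geometric objects attached to $P$ and to $Q$. In a CAT(0) cube complex, the fixed-point set of a group generated by elliptic isometries with a common fixed point (or, more carefully, a group with bounded orbits) is itself a nonempty convex subcomplex. I would argue that $P \cap Q$ acts on the Deligne complex with a fixed point: since both $P$ and $Q$ are finite type, they fix vertices $v_P = gA_T$ and $v_Q = hA_{T'}$ respectively, and I would consider the geodesic or the combinatorial convex hull connecting these, using Remark \ref{remark: Delinge complex fixed points} to control how intersection elements act on cubes. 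The goal is to show $P \cap Q$ stabilizes a vertex of the Deligne complex, which by the stabilizer description forces $P \cap Q$ to be contained in a single parabolic subgroup $R = kA_U k^{-1}$.

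The heart of the argument is then to promote this containment to an equality $P \cap Q = R$ with $R$ a genuine parabolic subgroup, and to verify $R$ is finite type. Here I would proceed by choosing $R$ minimally: among all parabolic subgroups containing $P \cap Q$, take one of smallest rank (or stabilizing a minimal-dimensional cube), and show it cannot properly contain $P \cap Q$. To do this I would likely need a local-to-global statement: near the minimal fixed vertex, the action of $P \cap Q$ on the link should decompose, and any element of $R$ not in $P \cap Q$ would move some adjacent geometric feature, contradicting minimality. Godelle's result that $Q \subseteq P$ implies $Q$ is parabolic in $P$ (cited in the introduction) is a natural ingredient to reduce questions about $P \cap Q$ inside $R$ to the already-understood nested case. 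Finiteness of type for $R$ follows because $R$ stabilizes a vertex of the Deligne complex, and by construction every vertex corresponds to a finite type standard parabolic.

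The main obstacle I anticipate is establishing that $P \cap Q$ has a global fixed point in the Deligne complex and that this fixed point is a \emph{vertex} rather than an interior point of a positive-dimensional cube or a point with infinite stabilizer structure. The subtlety is that while $P$ and $Q$ individually fix vertices, their intersection's fixed set is an intersection of two convex sets whose geometry must be controlled carefully; one must rule out the possibility that the intersection only fixes a point in the interior of a cube whose associated standard parabolic is infinite type, or fixes no point at all. This is exactly where the CAT(0) property of the FC type Deligne complex is essential, via the Bruhat–Tits fixed point theorem for the bounded orbits of $P \cap Q$, and where the careful bookkeeping of hyperplane types from Remark \ref{remark:Deligne hyperlanes} will be needed to identify the standard parabolic type $U$ of the fixed cube and confirm it is finite type.
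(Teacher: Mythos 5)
There is a genuine gap. Your setup matches the paper's: work in the Deligne complex, use the CAT(0) metric and Remark \ref{remark: Delinge complex fixed points} to see that $P\cap Q$ pointwise fixes a path between vertices $v_P$ and $v_Q$. But the obstacle you flag at the end (finding a \emph{vertex} fixed by $P\cap Q$) is not where the difficulty lies --- $v_P$ itself is a vertex fixed by $P\cap Q$, so the containment $P\cap Q\subseteq R$ for some finite type parabolic $R$ is immediate and costs nothing. The real difficulty is promoting ``contained in a parabolic'' to ``equal to a parabolic,'' and your proposed mechanism for this --- take $R$ minimal among parabolics containing $P\cap Q$ and argue that any element of $R\smallsetminus(P\cap Q)$ ``moves some adjacent geometric feature'' --- does not work as stated. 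A priori $P\cap Q$ could be a non-parabolic subgroup properly contained in every parabolic that contains it (its minimal parabolic envelope, in the sense of Corollary \ref{thrm: Minimal_subgroup}, could strictly contain it), and nothing in the minimality sketch produces a contradiction from that. Godelle's nesting result is not a substitute here, since it presupposes that the smaller subgroup is already known to be parabolic, which is exactly what is to be proved.

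The paper closes this gap by a different device: it inducts on the number of turning points (peaks and valleys) of a minimal-length edge path $\gamma$ from $v_P$ to $v_Q$, which is pointwise fixed by $P\cap Q$. Writing $P_i$ for the stabilizer of the $i$-th turning point, one gets $P_0\cap P_n\subseteq P_{n-1}$, so $P_0\cap P_n=(P_0\cap P_{n-1})\cap P_n$; the induction hypothesis handles $P_0\cap P_{n-1}$, and the last segment of $\gamma$ is monotone, so either $P_{n-1}\subseteq P_n$ (and one is done) or both $P_0\cap P_{n-1}$ and $P_n$ sit inside the single \emph{finite type} parabolic $P_{n-1}$, where the result is supplied by Theorem 9.5 of Cumplido et al.\ \cite{Cumplido2017}. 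That finite type theorem is the indispensable base of the whole argument, and your proposal never invokes it; without it (or a new proof of it) the argument cannot close. The $z_P$ machinery you introduce is also not needed for this theorem --- it belongs to the adjacency lemma later in the paper, not to the intersection result.
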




\begin{proof}

Consider the Deligne complex, and consider two vertices $v_P$ and $v_Q$ such that the stabilizer of  $v_P$ is the finite type parabolic subgroup $P$  and likewise the stabilizer of $v_Q$ is  $Q$. These vertices are not uniquely determined but will always exist, and this proof does not depend on the choice of vertex. Consider the geodesic between $v_P$ and $v_Q$. Because the Deligne complex is CAT(0), this geodesic is unique and so any element of $P\cap Q$ pointwise fixes the geodesic.

 By remark \ref{remark: Delinge complex fixed points}, any minimal length edge path from $v_P$ to $v_Q$ is pointwise fixed by $P\cap Q$. 
We choose such an edge path between $v_P$ and $v_Q$ and call this path $\gamma.$
Each edge in this path is either a \textit{downward edge}, that is an edge starting at a vertex $gA_X$ and ending at a vertex $ghA_{X\smallsetminus\{x_0\}}$ for some $x_0\in X$ and $h\in A_X$, 
or it is an \textit{upward edge,} that is an edge starting at a vertex $gA_X$ and ending a vertex $gA_{X\cup\{y_0\}}$ for some $y_0\notin X$. On a given edge path through the Deligne complex, define a \textit{turning point} to be a vertex that is either proceeded by a downward edge and followed by an upward edge (\textit{a valley}) or is proceeded by an upward edge and followed by a downward edge (\textit{a peak}). If there is a sequence of upward edges between $v$ and $v'$, then the stabilizer of $v$ is a subset of the stabilizer of $v'$, thus be keeping track of the direction of the edges in this way we can determine inclusion relations between parabolic subgroups. 

Consider the sequence of turning points along $\gamma$ and label them $v_1,v_2\dots v_{n-1}$. Rename $v_P=v_0$ and $v_Q=v_n$, so that the sequence $v_0, v_1,\dots v_n$ forms a sequence of vertices on the given edge path. 

We will now show the desired result by induction on $n.$

In the base case, we assume that $n=0$. In this case $v_0=v_n$ and so $P=Q$ and $P\cap Q=P$ which is a finite type parabolic subgroup. 


Now assume that the fixed edge path between the given parabolic subgroups $P$ and $Q$ has exactly $n$ turning points. Define the subgroup $P_i$ to be the stabilizer of the turning point $v_i$, with $P_0=P$ and $P_n=Q$. 

If we consider the part of $\gamma$ between $v_0$ and $v_{n-1}$ we see that this is a minimal edge path with $n-1$ turning points between $v_0$ and $v_{n-1}$ and is therefore fixed by $P_0\cap P_{n-1}$ (again see Remark \ref{remark: Delinge complex fixed points}). Thus, by the induction hypothesis $P_0\cap P_{n-1}$ is a finite type parabolic subgroup. 

Next note that the vertex $v_{n-1}$ is on the edge path between $v_0$ and $v_n$ and so $v_{n-1}$ is fixed by $P_0\cap P_n$. This implies that $P_0\cap P_n\subset P_{n-1}$. Thus $P_0\cap P_n=P_0\cap P_{n-1}\cap P_n$. We would like to show that this intersection is a finite type parabolic subgroup. 

We now consider two cases. 

Case 1: The final segment in the fixed edge path (from $v_{n-1}$ to $v_n$) consists entirely of upward edges.
  
This implies that $P_{n-1}\subsetneq P_n$ . Thus $P_0\cap P_{n-1}=P_0\cap P_{n-1}\cap P_n=P_0\cap P_n$ is a finite type parabolic subgroup as desired. 
 
Case 2: The final segment of the fixed edge path consists entirely of downward edges. 

 Note that $P_{n-1}$ is a finite type parabolic subgroup and both $P_0\cap P_{n-1}$ and $P_n$ are finite type parabolic subgroups of $P_{n-1}$. Thus by Cumplido et al \cite{Cumplido2017}(Theorem 9.5) $P_0\cap P_{n-1}\cap P_n$ is a finite-type parabolic subgroup as desired. 
\end{proof}


\begin{remark}
This proof uses the Deligne complex to reduce to the case where $\AG$ is finite type, established in \cite{Cumplido2017}. Another complex similar to the Deligne complex is the clique-cube complex, defined by Charney and the author in \cite{Charney2018}. This complex is CAT(0) for all Artin groups. However in the clique-cube complex, the stabilizers of vertices correspond to parabolic subgroups $gA_Tg^{-1}$ where the generators in $T$ forms a clique in the defining graph $\Gamma$. 
 If we knew that within an Artin group generated by a single clique the set of parabolic subgroups was closed under intersection, then we could use this argument  and the clique-cube complex to show that for an arbitrary Artin group, the intersection of two clique type parabolic subgroups is parabolic. In this way, this result falls into a larger category of results, all of which state: If property A is true for any Artin group defined by a single clique then property A is true for any Artin group. See \cite{GodelleParis2012a} for more examples of properties that follow this pattern. 
\end{remark}

Here is a corollary of the above lemma. 

\begin{corollary}\label{thrm: Minimal_subgroup}
Suppose that $\AG$ is an FC type Artin group. 
Given an element $\alpha$ inside a finite type parabolic subgroup of $\AG$, there exists a unique minimal(by inclusion) finite type parabolic subgroup containing $\alpha$.
\end{corollary}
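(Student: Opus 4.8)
The plan is to combine Theorem~\ref{Thrm:Intersections} (closure of finite type parabolic subgroups under intersection) with a descending chain condition, and then to invoke an elementary order-theoretic fact: a nonempty family of subgroups that is closed under pairwise intersection and has no infinite strictly descending chain contains a unique smallest member. That smallest member will be the desired minimal parabolic subgroup, and being smallest it is automatically unique.

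First I would set up the family. Let $\mathcal{F}$ denote the collection of all finite type parabolic subgroups of $\AG$ that contain $\alpha$. By hypothesis $\alpha$ lies in some finite type parabolic subgroup, so $\mathcal{F}\neq\emptyset$. If $P,Q\in\mathcal{F}$, then by Theorem~\ref{Thrm:Intersections} the intersection $P\cap Q$ is again a finite type parabolic subgroup, and it still contains $\alpha$; hence $P\cap Q\in\mathcal{F}$, so $\mathcal{F}$ is closed under pairwise intersection.

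Next I would establish the descending chain condition, which is the real content of the argument. Suppose toward a contradiction that there is an infinite strictly descending chain $P_1\supsetneq P_2\supsetneq\cdots$ in $\mathcal{F}$. Since each $P_{i+1}$ is a parabolic subgroup of $\AG$ contained in the parabolic subgroup $P_i$, Godelle's result (quoted in the introduction) lets me regard $P_{i+1}$ as a parabolic subgroup of the finite type Artin group $P_i\cong A_{T_i}$, that is $P_{i+1}=qA_{V}q^{-1}$ with $V\subseteq T_i$. Properness forces $V\subsetneq T_i$: if $V=T_i$ then $qA_{T_i}q^{-1}=A_{T_i}$ since $A_{T_i}$ is normal in itself, giving $P_{i+1}=P_i$. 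Setting $T_{i+1}=V$, I obtain $P_{i+1}\cong A_{T_{i+1}}$ with $|T_{i+1}|<|T_i|$, so the sizes $|T_1|>|T_2|>\cdots\geq 0$ strictly decrease among nonnegative integers. This is impossible, so every descending chain in $\mathcal{F}$ is finite. Note that framing the argument this way, by iterating Godelle's theorem and tracking only the size of the defining subset, lets me avoid any separate appeal to the well-definedness of the rank of a parabolic subgroup as an abstract invariant.

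Finally I would harvest the smallest element. The descending chain condition guarantees that $\mathcal{F}$ has a minimal element $P$. For any $Q\in\mathcal{F}$, the subgroup $P\cap Q$ lies in $\mathcal{F}$ and satisfies $P\cap Q\subseteq P$; by minimality of $P$ it cannot be strictly smaller, so $P\cap Q=P$, i.e.\ $P\subseteq Q$. Thus $P$ is contained in every member of $\mathcal{F}$, so it is simultaneously the unique minimal element and the smallest, proving both existence and uniqueness. The step I expect to require the most care is the descending chain condition: the order-theoretic harvesting and the closure under intersection are formal once Theorem~\ref{Thrm:Intersections} is in hand, whereas the rank-drop step must be justified by Godelle's "parabolic of a parabolic" theorem together with the observation that the full-subset parabolic of a finite type Artin group is the whole group.
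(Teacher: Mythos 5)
Your argument is correct, but it follows a genuinely different route from the paper's. The paper constructs the candidate subgroup explicitly: for $\alpha$ in a finite type parabolic subgroup $P$, it takes $P_\alpha$ to be the minimal parabolic subgroup of $P$ containing $\alpha$ as defined by Cumplido et al.\ in the finite type case (via the support of a suitably conjugated Garside normal form), and then uses Theorem \ref{Thrm:Intersections} only to prove independence of the ambient subgroup: setting $R=P\cap Q$, minimality on each side gives $P_\alpha=R_\alpha=Q_\alpha$. You instead run a purely order-theoretic argument on the family $\mathcal{F}$ of all finite type parabolic subgroups containing $\alpha$: closure under intersection from Theorem \ref{Thrm:Intersections}, plus a descending chain condition obtained by iterating Godelle's parabolic-of-a-parabolic theorem and tracking the strictly decreasing sizes of a specific nested sequence of defining subsets (a sensible precaution, since it sidesteps any appeal to a well-defined ``rank'' invariant). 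Both proofs ultimately rest on the same two external inputs --- Theorem \ref{Thrm:Intersections} and Godelle's result, which the paper uses implicitly when it asserts $P_\alpha\subset R_\alpha$ and $R_\alpha\subset P_\alpha$ --- but yours dispenses with the Garside-theoretic construction of $P_\alpha$ entirely. What you lose is the explicit description of the minimal subgroup as a conjugated support, which the paper's version inherits from the finite type theory and which is used elsewhere in the literature; what you gain is an argument that would transfer verbatim to any class of subgroups closed under intersection and satisfying the descending chain condition.
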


In the finite type case, Cumplido et al \cite{Cumplido2017} define the minimal parabolic subgroup corresponding to an element of the group. The Garside structure of a finite type Artin group provides a well-defined normal form for an element $\alpha$, and the support of such an element is the set of generators that appear in this normal form. The element $\alpha$ is contained in the subgroup $A_{supp(\alpha)}$ but this subgroup may not be minimal. Cumplido et al show that after suitable conjugation to a particular element $\alpha'=g^{-1}\alpha g$ the subgroup $P_\alpha=gA_{supp(\alpha')}g^{-1}$ will be the minimal subgroup containing $\alpha$ (\cite{Cumplido2017} Theorem 1.1).

Theorem \ref{Thrm:Intersections} allows us to extend this definition to any FC type Artin group.


\begin{proof} Given $\alpha$ in a finite type parabolic subgroup $P$ we define $P_\alpha$ to be the minimal parabolic subgroup of $P$ which contains $\alpha$ using the definition in \cite{Cumplido2017}. We must now show that if $\alpha$ is contained in two different finite type parabolic subgroups $P$ and $Q$ then $P_\alpha=Q_\alpha$ so the minimal subgroup does not depend on the parabolic subgroup containing $\alpha$. 

We know from Theorem \ref{Thrm:Intersections} that $R=P\cap Q$ is a parabolic subgroup containing $\alpha$. We also know that $R_\alpha\subset R \subset P$ so the fact that $P_\alpha$ is the minimal subgroup in $P$ containing $\alpha$ implies $P_\alpha\subset R_\alpha$. However $R_\alpha$ is the minimal subgroup of $R$ containing $\alpha$, and $P_\alpha\subset R$ therefore $R_\alpha\subset P_\alpha$. This implies that $R_\alpha=P_\alpha$. 

By an identical argument we get $R_\alpha=Q_\alpha$ and so $P_\alpha=Q_\alpha$ and the minimal subgroup containing the element $\alpha$ is independent of the finite type parabolic subgroup containing $\alpha$.
 \end{proof}


\section{The complex of parabolic subgroups} \label{sec:complex}
In the case of the mapping class group on the disc with $n$ punctures, the curve complex is defined to be the flag simplicial complex whose vertices are the non-degenerate simple closed curves in the disc, up to isotopy. Two curves are joined by an edge if, possibly after some isotopy, they are disjoint. The braid group  on $n$ strands is the mapping class group for the disc with $n$ punctures, and we would like to find an algebraic way to describe the curve complex so that it may be generalized to finite type Artin groups and to Artin groups in general. 

In \cite{Cumplido2017}, Cumplido et al show that there is a bijection $\phi$ between non-degenerate simple closed curves in the punctured disc and proper irreducible parabolic subgroups of the braid group. For a curve $C$, $\phi(C)$ is defined to be those automorphisms of the disc whose support is contained within $C.$ They also show that disjoint curves correspond to parabolic subgroups that satisfy one of the conditions below. 
Cumplido et al then define the complex of parabolic subgroups to be the flag complex whose vertices are proper irreducible parabolic subgroups where two subgroups are joined by an edge if they satisfy these conditions. 

\begin{definition} \cite{Cumplido2017}
 In any Artin group $\AG$, two parabolic subgroups $P$ and $Q$ are called \textit{adjacent} if one of the following conditions holds. 
 \begin{enumerate}
\item $P\subsetneq Q$ or  $Q\subsetneq P$
\item $P\cap Q=\{1\}$ and $pq=qp$ for all $p\in P$ and $q\in Q$
\end{enumerate}

We define $\CPG$, the \textit{complex of parabolic subgroups} to be the simplicial complex whose vertices are finite type proper irreducible parabolic subgroups, and where two vertices are connected by an edge if they are adjacent in the above sense. 
\end{definition}

While this definition works for any Artin group, having multiple adjacency conditions makes this definition cumbersome. Cumplido et al describe an equivalent condition for two subgroups to be adjacent in the context of finite type Artin groups and we extend this result to context of FC type Artin groups.

 First we need the following lemma.


\begin{lemma} \label{lem: conj by centers}
Suppose that $\AG$ is an FC type Artin group. Let $P$ and $Q$ be two finite type parabolic subgroups of $\AG$. Then, for every $g\in\AG,$ one has $g^{-1}P{g} = Q$ if and only if $g^{-1}z_Pg = z_Q$.
\end{lemma}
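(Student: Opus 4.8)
The plan is to prove both implications by exhibiting $z_P$ as a canonical element attached to $P$ and invoking the conjugation-equivariance of two constructions: the center of $P$ and the minimal-parabolic assignment of Corollary \ref{thrm: Minimal_subgroup}. The one external ingredient I would set up first is the \emph{length homomorphism} $\ell\colon\AG\to\Z$ sending every generator in $S$ to $1$; this is well defined because the two sides of each Artin relation are words of equal length, and it is conjugation invariant since $\Z$ is abelian. Writing $P=hA_Th^{-1}$ and $z_P=h\Delta_T^{k}h^{-1}$ with $k$ minimal such that $\Delta_T^{k}$ is central in $A_T$, one gets $\ell(z_P)=k\,\ell(\Delta_T)>0$, and similarly $\ell(z_Q)>0$. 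This positivity is what will pin down the sign in the forward direction.

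For the implication $g^{-1}Pg=Q\Rightarrow g^{-1}z_Pg=z_Q$, writing $P=hA_Th^{-1}$ gives $Q=(g^{-1}h)A_T(g^{-1}h)^{-1}$, so $Q$ is presented as a conjugate of the \emph{same} standard parabolic $A_T$. Since the minimal central power $k$ depends only on $T$, the definition of $z$ applied to this presentation yields $(g^{-1}h)\Delta_T^{k}(g^{-1}h)^{-1}=g^{-1}z_Pg$. Thus the only thing to verify is that $z_Q$ is independent of the chosen way of writing $Q$ as a conjugate of a standard parabolic. I would establish this well-definedness by characterizing $z_Q$ intrinsically: conjugation by $g$ restricts to an isomorphism $P\to Q$ that carries $Z(P)$ onto $Z(Q)$, preserves the canonical decomposition into irreducible parabolic factors, and intertwines the restrictions of $\ell$. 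Because $z_P$ is determined inside $P$ by exactly this data together with the requirement $\ell>0$, any such isomorphism must send $z_P$ to $z_Q$; the length function is precisely what excludes the competing possibility $z_P\mapsto z_Q^{-1}$.

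For the reverse implication I would use Corollary \ref{thrm: Minimal_subgroup}. The key claim is that $P$ is itself the minimal finite type parabolic subgroup containing $z_P$, that is $P_{z_P}=P$. Since $z_P$ is central in $P$, its only $P$-conjugate is itself, so computing the minimal parabolic inside the finite type group $P$ — legitimate by Corollary \ref{thrm: Minimal_subgroup}, which makes $P_{z_P}$ independent of the ambient parabolic — and using the support characterization of \cite{Cumplido2017}, the full support of $\Delta_T$ forces $P_{z_P}=P$; symmetrically $P_{z_Q}=Q$. The assignment $\alpha\mapsto P_\alpha$ is conjugation-equivariant: for any $g$, the subgroup $g^{-1}P_\alpha g$ is a finite type parabolic containing $g^{-1}\alpha g$ and is minimal as such, so uniqueness gives $g^{-1}P_\alpha g=P_{g^{-1}\alpha g}$. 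Hence, if $g^{-1}z_Pg=z_Q$, then $g^{-1}Pg=g^{-1}P_{z_P}g=P_{g^{-1}z_Pg}=P_{z_Q}=Q$, as required.

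The main obstacle is the claim $P_{z_P}=P$, i.e.\ that the central element $z_P$ is not contained in any strictly smaller finite type parabolic subgroup of $P$; everything else is bookkeeping once conjugation-equivariance of $\alpha\mapsto P_\alpha$ and of the center is in place. I expect to handle it through the two features recorded above — that $\Delta_T$ has full support $T$ and that $z_P$ is $P$-conjugation-invariant — reducing to the finite type statement of \cite{Cumplido2017} via Corollary \ref{thrm: Minimal_subgroup}. A secondary point requiring care, and the reason for introducing $\ell$, is the sign ambiguity in the forward direction: an abstract isomorphism $P\to Q$ sends a generator of the (infinite cyclic, or in the reducible case the relevant coordinate of the) center to a generator only up to inversion, and it is exactly the conjugation-invariant positivity $\ell(z_P)>0$ that distinguishes $z_Q$ from $z_Q^{-1}$.
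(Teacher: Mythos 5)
Your route is genuinely different from the paper's: the paper does not prove this lemma internally but cites Cumplido's finite-type argument (her Lemma~33) and asserts it transfers to FC type via Godelle's theory of normalizers and ribbons. Your reverse implication is a nice self-contained alternative and is essentially complete: $P_{z_P}=P$ reduces, via Corollary~\ref{thrm: Minimal_subgroup}, to the finite-type statement that an irreducible (or more generally any) finite type Artin group is the minimal parabolic containing its central element $\Delta_T^k$, which is indeed in \cite{Cumplido2017} (central elements have a one-point conjugacy class inside $A_T$ and $\Delta_T^k$ has full support); and your equivariance argument $g^{-1}P_\alpha g=P_{g^{-1}\alpha g}$ is correct because Corollary~\ref{thrm: Minimal_subgroup} makes $P_\alpha$ a genuine minimum, contained in every finite type parabolic containing $\alpha$. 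There is no circularity, since Corollary~\ref{thrm: Minimal_subgroup} precedes this lemma in the paper.

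The gap is in the forward implication for \emph{reducible} $P$. You correctly identify that the forward direction is exactly the well-definedness of $z_Q$ across presentations $Q=fA_Rf^{-1}=(g^{-1}h)A_T(g^{-1}h)^{-1}$, and your argument settles it when $P$ is irreducible: there $Z(P)\cong\Z$ is generated by $z_P$, the length homomorphism is conjugation-invariant, and positivity of $\ell$ selects the right generator. But when $T$ decomposes as $T_1\sqcup\cdots\sqcup T_m$, you have $z_P=\prod_i z_{P_i}^{k/k_i}$ with $k=\operatorname{lcm}(k_i)$, and your proof rests on two unproved assertions: that the conjugation isomorphism carries the irreducible parabolic direct factors $hA_{T_i}h^{-1}$ of $P$ onto those of $Q$ coming from the other presentation (a uniqueness-of-decomposition statement that is not automatic --- $\Z^m$ alone has many direct-factor decompositions, so you must use that the factors are parabolic, which needs an argument), and that the matched factors have equal invariants $k_i$ so the exponents $k/k_i$ agree. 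Positivity of $\ell$ on $Z(P)\cong\Z^m$ no longer pins down a single element, so the mechanism that closes the irreducible case does not extend. These are precisely the points where the cited proof invokes ribbons (a ribbon conjugating $A_T$ to $A_R$ carries irreducible components to irreducible components and $\Delta_{T_i}$-powers to $\Delta_{R_j}$-powers). Either restrict your self-contained argument to the irreducible case (which is all the paper uses downstream, in Lemma~\ref{thrm: Define Complex}) and cite ribbon theory for the general case, or supply a proof that the decomposition of a finite type parabolic into irreducible parabolic direct factors is canonical and conjugation-equivariant.
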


This result was shown in the finite type case by Cumplido (\cite{Cumplido2019} Lemma 33) and this proof also applies in the FC type case.  It relies heavily on Godelle's results about normalizers and ribbons in \cite{Godelle2003} and which Godelle later extends to Artin groups of FC type in \cite{Godelle2007}.  

We now establish the equivalency of several adjacency conditions. 

\begin{lemma}\label{thrm: Define Complex} 
Suppose that $\AG$ is an FC type Artin group, 
and $P$ and $Q$ are irreducible finite type parabolic subgroups. The following conditions are equivalent
\begin{enumerate}
\item P and Q are either equal or adjacent in the sense defined above. 
\item $z_Pz_Q=z_Qz_P$
\item There exists some $g\in \AG$ and $X,Y\subset S$ such that $P=gA_Xg^{-1}$, $Q=gA_Yg^{-1}$ and one of the following conditions holds 

\begin{enumerate}
\item $X\subset Y$ or  $Y\subset X$
\item $Y\cap X=\emptyset$ and $xy=yx$ for all $x\in X$ and $y\in Y$
\end{enumerate}

\end{enumerate}

Furthermore, if any of the conditions $1-3$ hold, then $P$ and $Q$ must both be contained in the same finite type parabolic subgroup. 
\end{lemma}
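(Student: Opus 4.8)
The plan is to prove the cycle of implications $(3)\Rightarrow(1)\Rightarrow(2)\Rightarrow(3)$ and then deduce the ``furthermore'' clause directly from condition $(3)$. Two of these implications are short and essentially formal, and the entire difficulty is concentrated in $(2)\Rightarrow(3)$, where I would use the geometry of the Deligne complex together with the reduction tools already established in Theorem \ref{Thrm:Intersections} and Corollary \ref{thrm: Minimal_subgroup}. For $(3)\Rightarrow(1)$: if $X\subset Y$ then $A_X\subset A_Y$, and conjugating by $g$ gives $P\subset Q$ (equal when $X=Y$, properly contained otherwise), which is condition $(1)$; the case $Y\subset X$ is symmetric, and if instead $X\cap Y=\emptyset$ with the generators of $X$ and $Y$ commuting, then $A_X$ and $A_Y$ commute elementwise and $A_X\cap A_Y=A_{X\cap Y}=\{1\}$ by van der Lek \cite{VanderLek1983}, so conjugating gives the second adjacency condition. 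For $(1)\Rightarrow(2)$: when $P=Q$ the claim is trivial; when $P\subsetneq Q$ we have $z_P\in P\subset Q$ while $z_Q$ is central in $Q$, so $z_Pz_Q=z_Qz_P$ (symmetrically for $Q\subsetneq P$); and when $P\cap Q=\{1\}$ with $P,Q$ commuting elementwise, $z_P\in P$ and $z_Q\in Q$ commute. The ``furthermore'' clause follows from $(3)$: in case $(3a)$ the larger of $A_X,A_Y$ is finite type and contains both $P$ and $Q$ after conjugation, and in case $(3b)$ the group $A_{X\cup Y}$ has Coxeter group $W_X\times W_Y$, which is finite, so $A_{X\cup Y}$ is finite type and $gA_{X\cup Y}g^{-1}$ is a finite type parabolic subgroup containing both $P$ and $Q$.

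The heart of the argument is $(2)\Rightarrow(3)$, which I would carry out in two stages, beginning by producing a single coset fixed by both central elements. Since $\AG$ is FC type, the Deligne complex is CAT(0). The element $z_P$ fixes any vertex stabilized by $P$, so its fixed-point set $\mathrm{Fix}(z_P)$ is a nonempty closed convex subset, and because $z_Q$ commutes with $z_P$ it preserves $\mathrm{Fix}(z_P)$. Letting $x_0$ be the CAT(0) nearest-point projection onto $\mathrm{Fix}(z_P)$ of a vertex $v_Q$ fixed by $Q$, equivariance of projection under the set-preserving isometry $z_Q$ gives $z_Q x_0=x_0$, so $x_0$ is fixed by both $z_P$ and $z_Q$. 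By Remark \ref{remark: Delinge complex fixed points} both elements then fix the entire cube whose interior contains $x_0$, and in particular fix a common vertex $gA_U$ with $A_U$ finite type; equivalently $g^{-1}z_Pg,\ g^{-1}z_Qg\in A_U$. Conjugating the whole configuration by $g^{-1}$, I may therefore assume $z_P,z_Q\in A_U$.

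The second stage reduces to the finite type case. With $z_P\in A_U$ and $A_U$ finite type, the containment property established in the proof of Corollary \ref{thrm: Minimal_subgroup} shows that the minimal finite type parabolic subgroup containing $z_P$ lies inside $A_U$; since $z_P$ generates the center of the irreducible finite type group $P$, this minimal parabolic is $P$ itself, so $P\subset A_U$, and likewise $Q\subset A_U$. By Godelle's result that a parabolic subgroup of an FC type Artin group contained in a standard parabolic is parabolic in it \cite{Godelle2007}, $P$ and $Q$ are irreducible finite type parabolic subgroups of the finite type Artin group $A_U$, and the relation $z_Pz_Q=z_Qz_P$ persists. The finite type version of the present equivalence \cite{Cumplido2017, Cumplido2019} then yields $h\in A_U$ and $X,Y\subset U$ with $P=hA_Xh^{-1}$ and $Q=hA_Yh^{-1}$ satisfying $(3a)$ or $(3b)$; undoing the conjugation by $g^{-1}$ replaces $h$ by $gh$ and produces condition $(3)$ for the original $P$ and $Q$.

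I expect the main obstacle to be the first stage of $(2)\Rightarrow(3)$, namely upgrading a common fixed \emph{point} to a common fixed \emph{vertex}: the projection argument only locates $x_0$, and it is Remark \ref{remark: Delinge complex fixed points} that forces both $z_P$ and $z_Q$ to fix the carrier cube pointwise and hence a vertex $gA_U$, which is what unlocks the reduction. A secondary point requiring care is the claim that the minimal parabolic of the central element $z_P$ is all of $P$, so that $z_P\in A_U$ genuinely forces $P\subset A_U$; this is precisely where irreducibility of $P$ is used and should be isolated as a small preliminary observation.
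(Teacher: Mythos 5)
Your proposal is correct, and the easy implications, the ``furthermore'' clause, and the hand-off to the finite type theorem of Cumplido et al.\ match the paper. But your argument for $(2)\Rightarrow(3)$ is genuinely different from the paper's. The paper works with the fixed sets of the \emph{subgroups}: it assumes $Fix(P)\cap Fix(Q)=\emptyset$, takes a hyperplane $H$ separating these convex sets and closest to $Fix(P)$, uses Lemma \ref{lem: conj by centers} to convert $z_Pz_Q=z_Qz_P$ into $z_PQz_P^{-1}=Q$ (hence $z_P Fix(Q)=Fix(Q)$), and derives a contradiction by showing $z_P$ carries the halfspace containing $Fix(Q)$ into the opposite halfspace. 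The vertex it produces is fixed by all of $P$ and all of $Q$ at once, so the containment $P,Q\subset R$ is immediate. You instead work with the fixed sets of the \emph{central elements}: a CAT(0) nearest-point projection of $v_Q$ onto $Fix(z_P)$, equivariant under the commuting isometry $z_Q$, produces a common fixed point of $z_P$ and $z_Q$ directly (no contradiction argument, no separating hyperplane, and no appeal to Lemma \ref{lem: conj by centers}); Remark \ref{remark: Delinge complex fixed points} upgrades this to a common fixed vertex $gA_U$. The price is that you only get $z_P,z_Q\in gA_Ug^{-1}$, so you must then invoke Corollary \ref{thrm: Minimal_subgroup} together with the fact that the minimal finite type parabolic containing $z_P$ is $P$ itself when $P$ is irreducible (a result of Cumplido et al.\ that you correctly flag as needing to be isolated and cited) to conclude $P,Q\subset gA_Ug^{-1}$. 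Both routes are sound; yours is shorter on the geometry and leans more on the algebraic machinery of Section \ref{sec: Intersections}, while the paper's avoids the $P_{z_P}=P$ step entirely at the cost of the hyperplane analysis.
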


\begin{proof}
It is clear that $1$ implies $2$ and $3$ implies $1$. We will now show that $2$ implies $3$ by assuming that $z_Pz_Q=z_Qz_P$ and proving that $P$ and $Q$ are both contained the same finite type parabolic subgroup. This reduces the proof to the case where $\AG$ is finite type a case which was shown by Cumplido et al in (\cite{Cumplido2017} Theorem 2.2).

Consider the Deligne complex for the Artin group $\AG$, and suppose that $Fix(P)$ is the set of points that are pointwise fixed by the subgroup $P$. 
We would like to show that $Fix(P)$ and $Fix(Q)$ intersect in some vertex. This would imply that the stabilizer of that vertex is the desired finite type parabolic subgroup containing both $P$ and $Q$. 

Suppose for contradiction that $Fix(P)$ and $Fix(Q)$ are disjoint. By Remark \ref{remark: Delinge complex fixed points} both $Fix(P)$ and $Fix(Q)$ are convex subcomplexes, so if they are disjoint then there is some hyperplane $H$ which separates $Fix(P)$ and $Fix(Q)$. We choose $H$ as close as possible to the set $Fix(P)$. That is we would like some vertex $v_p\in Fix(P)$ such that $d(H,v_p)<1$. If $H$ does not have this property, for any point in $Fix(P)$ then Remark \ref{remark: Delinge complex fixed points} implies that we can replace $H$ with a different hyperplane closer to $Fix(P)$.

Now conjugate the entire complex so that $v_p=A_X$ for some $X\subset S$.  The hyperplane $H$ crosses at least one of the edges adjacent to $A_X$, and has $Fix(P)$ entirely on one side of it. Notice however that any vertex in the upward link of $A_X,$ that is any vertex of the form $A_{X\cup\{t\}}$ for some $t\in S$, must also be in $Fix(P)$ so $H$ divides $A_X$ from some vertex in the downward link of $A_X$. 
This vertex will have the form $gA_{X\smallsetminus\{s\}}$, for some $g\in A_X$ and some $s\in X$. In particular this shows that  after conjugation by $g^{-1}$, we have the hyperplane $H$ crossing the edge, $e_H$ between $A_X$ and $A_{X\smallsetminus\{s\}}$. Denote by $H^-$ the halfspace containing $A_{X\smallsetminus\{s\}}$ and $H^+$ the halfspace containing $A_X$.  By our assumption, the halfspace $H^-$ will contain $Fix(Q)$ and the halfspace $H^+$ with contain $Fix(P)$. 

\begin{figure}
\centering
\begin{tikzpicture}[thick, scale=.8]


\fill[pattern=north east lines, pattern color=gray!40](-6.4,2)--(-3.7,2)--(-3.7,-1.5)--(-6.4,-1.5);

\draw (-5.1,1) node{$Fix(Q)$};

\fill[pattern=north east lines, pattern color=gray!40](7,2)--(4.3,2)--(4.3,-1.5)--(7,-1.5);

\draw (5.7,1) node{$z_PFix(Q)$};

\fill[pattern=north east lines, pattern color=gray!40](-1,2)--(1,2)--(1,-1.5)--(-1,-1.5);

\draw (0,1) node{$Fix(P)$};

\draw (0,-.5)--(-3,-.5);
\draw (0,-.5)--(3,-.5);
\draw (0,-.5) node[vertex, label={[shift={(.2,0)}]$A_X$}]{};

\draw (-3,-.5) node[vertex, label={[shift={(.2,0)}]$A_{X\smallsetminus\{s\}}$}]{};

\draw (3,-.5) node[vertex, label={[shift={(0,0)}]$z_PA_{X\smallsetminus\{s\}}$}]{};


\draw[dashed,<->] (-1.5,2)--(-1.5,-1.5);
\draw[dashed,<->] (1.5,2)--(1.5,-1.5);

\draw (-1.5,2.2) node{$H$};

\draw(1.5,2.2) node{$z_PH$};

\end{tikzpicture}
\caption{ In the proof of Lemma \ref{thrm: Define Complex}, we assume that the hyperplane $H$ separates the convex sets $Fix(Q)$ and $Fix(P)$ and derive a contradiction by noting that $H$ and $z_pH$ do not cross, and yet $z_PFix(Q)\subset Fix(Q)$.}
\end{figure}
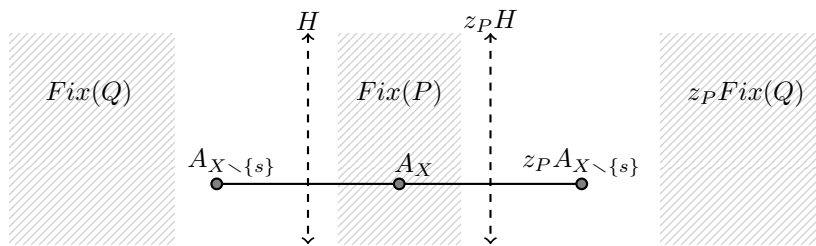

Now suppose that $v_q\in Fix(Q)$. 
Because $z_Pz_Q=z_Qz_P$, by Lemma \ref{lem: conj by centers} we know that $z_PQ=Qz_P$. This means that for any $q\in Q$ we have $qz_P=z_Pq'$ for some $q'\in Q$ and therefore $qz_Pv_q=z_Pq'v_q=z_Pv_q$.  Thus the vertex  $z_Pv_q$ must also be in $Fix(Q)$. However, we have assumed that that $Fix(Q)\subset H^-$.  We will show that $z_PFix(Q)\subset H^+$ giving a contradiction. 

The hyperplane $z_P H$ is a hyperplane intersecting the edge, $z_Pe_H$ between $z_PA_{X\smallsetminus\{s\}}$ and $z_PA_X=A_X$. The edge $z_Pe_H$ is adjacent to $A_X$, but not equal to $e_H$ as $A_{X\smallsetminus\{s\}}$ is in $H^-$ and so not in $Fix(P)$. This implies that $z_Pe_H$ is in $H^+$. Furthermore, by Remark \ref{remark:Deligne hyperlanes}, $H$ and $z_PH$ cannot intersect, therefore $z_PH^-\subset H^+$. This implies that $z_PFix(Q)\subset H^+$ giving a contradiction. 

Therefore $Fix(P)$ and $Fix(Q)$ must intersect in at least one vertex $v$ and this vertex has stabilizer $R$ which is some finite type parabolic subgroup. Both $P$ and $Q$ are subsets of $R$. This reduces the result to the case where both subgroups $P$ and $Q$ are contained in a finite type Artin group.
\end{proof}

The Artin group $\AG$ acts on the complex of parabolic subgroups, $\CPG$, by conjugation. This is an action by isometries as the adjacency conditions are preserved by conjugation. The action is also co-compact, with fundamental domain consisting of the subcomplex spanned by vertices corresponding to standard parabolic subgroups. 

The action is not proper as any element in a given parabolic subgroup will fix the vertex corresponding to this subgroup.

\section{Properties of the complex of parabolic subgroups}\label{sec:Infinite diameter}

The complex of parabolic subgroups is constructed so that, when restricted to the braid group case, it corresponds to the curve complex. We would like to know which properties of the curve complex carry over to the complex of parabolic subgroups. In particular we are interested in when the complex is connected, when it is finite or infinite diameter, when it is locally finite, as well as the question of when this complex might be hyperbolic. We will also consider the question of how, for some subset $T\subset S$, the complex of parabolic subgroups $\mathcal{CP}_{A_T}$ might relate to $\CPG$.

First we note that $\CPG$ gives very little information for reducible Artin groups. 

\begin{lemma} \label{lem:products}
 If $A_S$ is an FC type Arting groups which is a direct product $A_{S_1}\times A_{S_2}$ then $\CPG$ is the join of $\mathcal{CP}_{A_{S_1}}$ and $\mathcal{CP}_{A_{S_2}}$. In particular the complex has diameter $2$.
\end{lemma}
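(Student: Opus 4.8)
The plan is to identify the vertices of $\CPG$ with those of the two factor complexes and then to show that the adjacency relation across the two factors is always satisfied, so that $\CPG$ is assembled from $\mathcal{CP}_{A_{S_1}}$ and $\mathcal{CP}_{A_{S_2}}$ exactly as a join. Since $A_S=A_{S_1}\times A_{S_2}$, the generating set splits as $S=S_1\sqcup S_2$ with every generator of $S_1$ commuting with every generator of $S_2$, so $A_{S_1}$ and $A_{S_2}$ commute elementwise and meet only in $\{1\}$. The first and principal step is a classification of irreducible parabolics: if $T\subseteq S$ and $A_T$ is irreducible, then writing $T=(T\cap S_1)\sqcup(T\cap S_2)$ gives $A_T=A_{T\cap S_1}\times A_{T\cap S_2}$, and irreducibility forces one factor to be trivial, i.e. $T\subseteq S_1$ or $T\subseteq S_2$. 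Given a vertex $P=gA_Tg^{-1}$ with, say, $T\subseteq S_1$, I would write $g=g_1g_2$ with $g_i\in A_{S_i}$; since $g_2$ commutes with $A_{S_1}\supseteq A_T$, it can be absorbed, giving $P=g_1A_Tg_1^{-1}$, a parabolic subgroup lying inside the factor $A_{S_1}$. Thus every vertex of $\CPG$ is a parabolic subgroup of one of the two factors, and, being finite type, irreducible, and proper, it is a vertex of $\mathcal{CP}_{A_{S_1}}$ or of $\mathcal{CP}_{A_{S_2}}$.

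With the vertices located, I would verify the two adjacency claims. For a cross-factor pair, if $P\leq A_{S_1}$ and $Q\leq A_{S_2}$ are nontrivial parabolic subgroups, then $P\cap Q\subseteq A_{S_1}\cap A_{S_2}=\{1\}$ and every element of $P$ commutes with every element of $Q$, so $P$ and $Q$ satisfy the second adjacency condition and are joined by an edge; note this uses only the direct-product structure, not Theorem \ref{Thrm:Intersections}. For pairs in the same factor, the adjacency conditions — inclusion, trivial intersection, and elementwise commuting — are computed identically whether one works inside $A_{S_1}$ or inside $A_S$, so the restriction of adjacency in $\CPG$ to the $A_{S_1}$-vertices agrees with adjacency in $\mathcal{CP}_{A_{S_1}}$, and likewise for $A_{S_2}$. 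Because $\CPG$ is a flag complex, its simplices are exactly the pairwise-adjacent vertex sets; such a set decomposes as a pairwise-adjacent set in each factor, which is precisely the simplex structure of the join $\mathcal{CP}_{A_{S_1}}*\mathcal{CP}_{A_{S_2}}$.

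The delicate point — and the one I would treat most carefully — is the bookkeeping of the \emph{whole-factor} subgroups $A_{S_1}$ and $A_{S_2}$. When, say, $A_{S_1}$ is finite type and irreducible, it is a proper parabolic subgroup of $A_S$, hence a vertex of $\CPG$, yet it is \emph{not} a vertex of $\mathcal{CP}_{A_{S_1}}$, where it fails to be proper; the extreme case is a rank-one factor $S_2=\{s\}$, for which $\mathcal{CP}_{A_{S_2}}$ is empty while $A_{\{s\}}$ is still a vertex of $\CPG$. Such a subgroup contains every parabolic of its own factor (condition $1$) and commutes trivially with every parabolic of the complementary factor (condition $2$), so it is adjacent to every vertex of $\CPG$; it therefore plays the role of an apex, and it is this apex that must be reconciled with the statement that $\CPG$ is the join on the nose. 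For the diameter assertion, however, these apices cause no difficulty: given any two vertices, if they lie in different factors they are adjacent by the cross-factor computation, while if they both lie in, say, the $A_{S_1}$-factor, then for any $s\in S_2$ the vertex $A_{\{s\}}$ lies in the $A_{S_2}$-factor and is adjacent to both, giving a path of length two. Hence $\CPG$ has diameter at most $2$ (and exactly $2$ unless it is a single simplex), as claimed. I expect this apex bookkeeping to be the main obstacle; everything else is a direct translation of the adjacency conditions through the direct-product decomposition.
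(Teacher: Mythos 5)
Your proposal is correct and follows the same basic route as the paper: identify each vertex of $\CPG$ with an irreducible parabolic subgroup lying in one of the two factors, then observe that any cross-factor pair is adjacent. The paper's own proof is a two-sentence version of this, using the criterion $z_Pz_Q=z_Qz_P$ from Lemma \ref{thrm: Define Complex} for the cross-factor edges where you verify adjacency condition $2$ directly from the direct-product structure; your version is the more elementary of the two, since it does not route through that lemma. The one place where you go beyond the paper is worth keeping: your observation about the whole-factor apex vertices is a genuine imprecision in the paper's argument, which asserts that the vertex set of $\CPG$ is exactly the union of the vertex sets of $\mathcal{CP}_{A_{S_1}}$ and $\mathcal{CP}_{A_{S_2}}$. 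When $A_{S_1}$ is itself finite type and irreducible it is a proper parabolic subgroup of $A_S$, hence a vertex of $\CPG$, but is not proper in $A_{S_1}$ and so is not a vertex of $\mathcal{CP}_{A_{S_1}}$; in the extreme case of a rank-one factor, $\mathcal{CP}_{A_{S_2}}$ is empty and the join statement read literally would not even yield diameter $2$. Your resolution --- that these apices are adjacent to everything, so the diameter bound survives in all cases --- is exactly the right fix, and makes your write-up strictly more careful than the published proof.
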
 

\begin{proof}
 The set of vertices in $\CPG$ consists of only irreducible parabolic subgroups, so it is precisely the union of the set of proper irreducible parabolic subgroups in $A_{S_1}$ and the set of proper irreducible parabolic subgroups in $A_{S_2}$.  Any vertex representing a subgroup  $P< A_{S_1}$ must be connected by an edge to all the vertices representing subgroups $Q<{A_{S_2}}$ as in this case $z_Pz_Q=z_Qz_P$.
 \end{proof}
 
 Notice that this lemma applies even when $A_{S_1}$ or $A_{S_2}$ is not finite type. As a result of this lemma the rest of this paper will focus exclusively on irreducible Artin groups. 
 We now turn to the question of when the complex of parabolic subgroups is connected. 
 
\begin{lemma} \label{lem_connected}
Let $\AG$ be an irreducible FC type Artin group. The complex $\CPG$ is connected if and only if $\Gamma$ is connected and has at least three vertices. 
\end{lemma}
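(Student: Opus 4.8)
The plan is to prove the two directions separately, leaning throughout on the equivalences of Lemma~\ref{thrm: Define Complex}; in particular its final clause, that two adjacent vertices always lie inside a common finite type parabolic subgroup, will do most of the work for necessity, while for sufficiency the driving mechanism will be the $\AG$-action on $\CPG$ by automorphisms.

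\textbf{Necessity.} I would argue the contrapositive: if $\G$ is disconnected or has at most two vertices, then $\CPG$ is disconnected. The key observation is that every vertex $gA_Tg^{-1}$ has irreducible type $T$, so $\G[T]$ is connected and lies in a single connected component of $\G$; this assigns each vertex a ``colour''. If two vertices are adjacent they lie in a common finite type parabolic $gA_Zg^{-1}$, and $A_Z$ finite type forces $\G[Z]$ connected (a disconnected diagram gives a free product, which is of infinite type), so both types sit in the one component meeting $Z$ and the two vertices share a colour. When $\G$ is disconnected, each component contributes at least the rank-one vertex $\langle s\rangle$, so at least two colours occur and no edge joins them. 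When $\G$ has exactly two vertices $\{s,t\}$ with $m_{st}\neq 2$ (by irreducibility), every proper irreducible parabolic has rank one; distinct rank-one parabolics cannot be nested, and an adjacency would, by Lemma~\ref{thrm: Define Complex}, force them into a common finite type parabolic of rank two, i.e.\ into $\AG$ itself with $m_{st}<\infty$, where the commutation alternative of condition~3 then demands that $s$ and $t$ commute, a contradiction. Hence $\CPG$ is a discrete set of at least two points. The cases $|S|\leq 1$ are degenerate and give an empty complex.

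\textbf{Sufficiency.} Assume $\G$ is connected with at least three vertices. First I would reduce to rank-one vertices: any vertex $gA_Tg^{-1}$ with $|T|\geq 2$ strictly contains, and is therefore adjacent to, the rank-one parabolic $g\langle s\rangle g^{-1}$ for any $s\in T$, so it suffices to place all rank-one parabolics in one component. Next I would show the standard rank-one parabolics $\{\langle s\rangle : s\in S\}$ lie in a single component $\mathcal{C}$: along an edge $s\!-\!t$ with $m_{st}=2$ the subgroups $\langle s\rangle,\langle t\rangle$ commute with trivial intersection (van der Lek, \cite{VanderLek1983}) and so are adjacent, while for $m_{st}\geq 3$ the group $A_{\{s,t\}}$ is a proper (here is where $|S|\geq 3$ enters) irreducible finite type parabolic, yielding the path $\langle s\rangle - A_{\{s,t\}} - \langle t\rangle$; connectedness of $\G$ then links every $\langle s\rangle$.

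The crux, and the step I expect to be the main obstacle, is passing from standard rank-one parabolics to arbitrary conjugates $g\langle s\rangle g^{-1}$, since such a conjugate need not be adjacent to either $\langle s\rangle$ or $\langle a\rangle$ when the conjugating letter $a$ is non-adjacent to $s$ in $\G$. Rather than track the conjugator letter by letter, I would exploit that $\AG$ acts on $\CPG$ by automorphisms and hence permutes its connected components. For a generator $a$, the component $a\mathcal{C}a^{-1}$ contains $a\langle a\rangle a^{-1}=\langle a\rangle\in\mathcal{C}$, so $a\mathcal{C}a^{-1}=\mathcal{C}$; as this holds for every generator, $\mathcal{C}$ is $\AG$-invariant. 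Therefore $g\langle s\rangle g^{-1}=g\cdot\langle s\rangle\in g\cdot\mathcal{C}=\mathcal{C}$ for all $g$ and all $s$, so every rank-one parabolic lies in $\mathcal{C}$, and by the reduction so does every vertex; thus $\CPG=\mathcal{C}$ is connected.
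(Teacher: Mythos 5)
Your proof is correct, and its overall architecture matches the paper's: both directions rest on the same three pillars (the free-product obstruction when $\G$ is disconnected, the dihedral obstruction when $|S|=2$, and connectivity of the standard/fundamental-domain vertices via the paths $A_{\{s\}}-A_{\{s,t\}}-A_{\{t\}}$ or the commuting case $m_{st}=2$). Where you genuinely diverge is in the two finishing steps, and in both places your version is arguably tighter. For necessity, the paper argues directly with centralizers (in the dihedral case, $s$ commutes only with its own powers and central elements; in the free-product case, no element of $A_{S_1}$ commutes with anything outside it), whereas you route everything through the final clause of Lemma~\ref{thrm: Define Complex} --- adjacent vertices lie in a common finite type parabolic, whose type spans a clique and hence a single component of $\G$ --- which packages the disconnected case as a colouring invariant; one small point you should make explicit is that the colour of a vertex is well defined, i.e.\ that a nontrivial parabolic cannot be simultaneously a subgroup of conjugates of two distinct free factors (this follows from the normal form in free products, or Kurosh). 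For sufficiency, the paper passes from standard parabolics to arbitrary conjugates by writing $g$ as a product of normalizers and exhibiting an explicit edge path, while you observe that the conjugation action permutes components and that the component $\mathcal{C}$ of the standard rank-one vertices is fixed by every generator $a$ (since $a\cdot\langle a\rangle=\langle a\rangle$), hence by all of $\AG$; this invariance argument is cleaner and sidesteps having to justify the paper's path term by term.
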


\begin{proof}

 If $\Gamma$ is a single edge with $m_{st}\geq 3 $ then $\AG$ (sometimes called a dihedral Artin group) has complex of parabolic subgroups that is totally disconnected. The vertex $A_{\{s\}}$ is not connected to any other vertices, as the element $s$ commutes only with powers of $s$ and with elements of the center of $\AG$, none of which generate the center of a proper subgroup distinct from $A_{\{s\}}$. 

 Similarly, if  $\AG$ can be decomposed as a free product, then $\CPG$ is not connected. In particular, if $\AG=A_{S_1}*A_{S_2}$ then any finite type parabolic subgroup must be contained in $A_{S_1}$ or $A_{S_2}$ or a conjugate of $A_{S_1}$ or $A_{S_2}$. The subcomplex consisting of the subgroups in $A_{S_1}$  is not connected to any other vertex as no element of $A_{S_1}$ commutes with an element not in $A_{S_1}$. In this way, we see that $\CPG$ is the disjoint union of subcomplexes corresponding to the conjugates of $A_{S_1}$ and $A_{S_2}$.

 
 
 However,  if $\Gamma$ is connected and has at least 3 vertices then $\CPG$ has connected fundamental domain. Any pair of vertices $s,t$ connected by an edge in $\G$ give either two connected vertices in $\CPG$, namely $A_{\{s\}}$ and $A_{\{t\}}$, or produce a vertex $A_{\{s,t\}}$ which is a proper irreducible parabolic subgroup and connected by edges to both $A_{\{s\}}$ and $A_{\{t\}}$. Thus as long as $\G$ is connected with more than $2$ vertices the fundamental domain of $\CPG$ is connected. 
 
 If $gA_Xg^{-1}$ and $A_Y$ are vertices on $\CPG$ then by writing $g$ as a sequence of normalizers $g=g_1g_2\dots g_k$ where $g_iA_{X_i}g_i^{-1}=A_{X_i}$, we obtain the following path in $\CPG$ from  $A_Y$ to $gA_Xg^{-1}$. 
 
 \[A_Y, A_{X_1}, g_1A_{X_2}g_1^{-1}, g_1g_2A_{X_3}(g_1g_2)^{-1}\dots gA_{X_{k-1}}g^{-1},gA_Xg^{-1}\]. 
 \end{proof}

This agrees with known facts about the curve complex. The curve complex of the mapping class group of a surface in connected except in the so called sporadic cases, that is cases where no essential curves in the surface are disjoint \cite{Masur2012}. 

The complex of parabolic subgroups is also similar to the curve complex in that, as long as it is connected, it is never locally finite. In particular, the vertex $A_{\{s,t\}}$ is connected to $A_{\{s\}}$, $A_{\{t\}}$, and all of their conjugates. If there are more than three vertices than $A_{\{s,t\}}$ is proper, and if $A_{\{s,t\}}$ is irreducible then are are infinitely many such conjugates. Thus $\CPG$ is not locally finite. 

Next we consider the question of when the complex of parabolic subgroups has infinite diameter. We have already established that the complex $\CPG$ has finite diameter when $\AG$ is reducible. The following theorem is a partial converse. 

\begin{theorem}\label{thrm:inf_diam}
Suppose that $\AG$ is an irreducible FC type Artin group. Further suppose that the defining graph $\G$ is not a join and has at least two vertices. Then the complex of parabolic subgroups $\CPG$ has infinite diameter. 
\end{theorem}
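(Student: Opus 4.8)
The plan is to realise a suitable element of $\AG$ as a ``translation'' on a line onto which $\CPG$ maps coarsely, and thereby force unbounded distances. First I would extract the right element from the hypothesis. Since $\G$ is not a join, its complement graph is connected, and as it has at least two vertices the complement contains an edge; hence there are $s,t\in S$ with $m_{st}=\infty$. Set $g=st$ and consider the vertices $P_0=A_{\{s\}}$ and $P_n=g^nA_{\{s\}}g^{-n}$ of $\CPG$ (each is proper, irreducible and finite type, so a genuine vertex). Because $\AG$ acts on $\CPG$ by isometries, it is enough to show $d(P_0,P_n)\to\infty$. (If $\G$ is disconnected then $\AG$ is a nontrivial free product and $\CPG$ is disconnected, so infinite diameter is immediate; thus the content is the connected case, where $\CPG$ is connected by Lemma~\ref{lem_connected}.)

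Next I would build a tree recording the non-edge. The vertex $t$ has $s$ as a non-neighbour, so $\mathrm{lk}(t)\neq S\smallsetminus\{t\}$, and $\G$ decomposes as $(S\smallsetminus\{t\})$ glued to the star $\mathrm{st}(t)$ along $\mathrm{lk}(t)$ with no edges between the $s$-side and $t$. By van der Lek's injectivity of standard parabolics this gives an amalgamated splitting $\AG=A_{S\smallsetminus\{t\}}*_{A_{\mathrm{lk}(t)}}A_{\mathrm{st}(t)}$ and hence an action on its Bass--Serre tree $T$. Here $g=st$ is a product of $s\in A_{S\smallsetminus\{t\}}\smallsetminus A_{\mathrm{lk}(t)}$ and $t\in A_{\mathrm{st}(t)}\smallsetminus A_{\mathrm{lk}(t)}$, so it is cyclically reduced of syllable length two and acts loxodromically on $T$ with an axis $L$ and translation length $\ell(g)>0$.

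The crucial structural point I would then establish is that \emph{every} vertex of $\CPG$ is elliptic on $T$. A finite type parabolic is supported on a clique: if $A_X$ is finite type then $\WG[X]$ is finite, so no pair in $X$ has label $\infty$, i.e. $X$ spans a complete subgraph. A clique cannot meet both sides of the separation $\mathrm{lk}(t)$ (those vertices would be non-adjacent), so every finite type standard parabolic lies in a vertex group, and every finite type parabolic is elliptic with non-empty fixed subtree. By Lemma~\ref{thrm: Define Complex}, two adjacent vertices $P,Q$ lie in a common finite type parabolic $R$, whence $\mathrm{Fix}_T(R)\subseteq \mathrm{Fix}_T(P)\cap\mathrm{Fix}_T(Q)\neq\emptyset$; so adjacent vertices have intersecting fixed subtrees, and their projections $\pi_L(\mathrm{Fix}_T(\cdot))$ to $L$ are overlapping subintervals. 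Along a geodesic $P_0=Q_0,\dots,Q_m=P_n$ these intervals then form an overlapping chain covering the segment between $\pi_L(\mathrm{Fix}_T(P_0))$ and $\pi_L(\mathrm{Fix}_T(P_n))$, and the two ends are $g^n$-translates of one another, so the covered segment has length at least $n\,\ell(g)-C$ for a constant $C$ depending only on $P_0$. If each interval $\pi_L(\mathrm{Fix}_T(Q_i))$ had diameter at most a uniform $D$, then $m\ge (n\,\ell(g)-C)/D\to\infty$, proving the theorem.

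The hard part will be exactly this last uniform bound: I must show a finite type parabolic can fix only a boundedly long segment of $L$, equivalently that the action on $T$ is acylindrical enough along the axis. This is where the precise shape of the splitting must be used — one needs to bound the intersections $g^kA_{\mathrm{lk}(t)}g^{-k}$ of the consecutive edge stabilisers met along $L$, exploiting that $s$ and $t$ generate free subgroups with $\mathrm{lk}(t)$ and do not normalise $A_{\mathrm{lk}(t)}$. Controlling these intersections (possibly after choosing the vertex $t$ so that the edge group $A_{\mathrm{lk}(t)}$ is as small as possible, or by invoking a rank-one/contracting element of the CAT(0) Deligne complex, whose existence is precisely guaranteed by $\G$ not being a join) is the technical heart; everything else above is formal.
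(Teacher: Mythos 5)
Your overall strategy is genuinely different from the paper's: you split $\AG$ as an amalgam $A_{S\smallsetminus\{t\}}*_{A_{\mathrm{lk}(t)}}A_{\mathrm{st}(t)}$ over a non-edge $\{s,t\}$ and try to push vertices of $\CPG$ onto the axis of $g=st$ in the Bass--Serre tree, whereas the paper passes through the quasi-isometry of Lemma~\ref{lem:Quasi-Iso} with the Cayley graph $\Cayley$ over the normalizer generating set $\mathcal{N}$, and then counts hyperplanes in the Deligne complex along the axis of a rank-one element supplied by Lemma~\ref{lem: rank one}. The formal skeleton of your argument (every finite type parabolic is supported on a clique, hence elliptic; adjacent vertices of $\CPG$ lie in a common finite type parabolic by Lemma~\ref{thrm: Define Complex}, hence have intersecting fixed subtrees) is correct. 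But the step you flag as ``the hard part'' --- a uniform bound $D$ on the diameter of $\pi_L(\mathrm{Fix}_T(Q))$ over all vertices $Q$ of $\CPG$ --- is not a technical loose end; it is the entire content of the theorem, and for your specific choice $g=st$ it is actually \emph{false}. Take $\G$ on $\{s,t,u,v\}$ with edges $su$, $tu$, $sv$ all labeled $2$ and no other edges: the complement is a path, so $\G$ is not a join, and $m_{st}=\infty$. Here $u$ commutes with $s$ and with $t$, hence with $g=st$, so $\mathrm{Fix}_T(A_{\{u\}})$ is $g$-invariant and contains all of $L$; worse, $A_{\{u\}}$ is adjacent in $\CPG$ to both $A_{\{s\}}$ and $g^nA_{\{s\}}g^{-n}$ for every $n$, so your chosen pair of vertices stays at distance at most $2$. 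No acylindricity argument can rescue that particular $g$.

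What this shows is that the non-join hypothesis must be used to manufacture a much more carefully chosen element than $st$: one whose axis cannot be shadowed by the fixed set of any single finite type parabolic. That is precisely what the paper imports as Lemma~\ref{lem: rank one} (from \cite{Charney2018}): an element $g$ acting loxodromically on the Deligne complex whose axis crosses, in each fundamental domain, $2k$ pairwise non-crossing hyperplanes that admit no common nontrivial stabilizer. The paper then avoids your projection-chain bookkeeping altogether by working in $\Cayley$: a hypothetical short word $n_1\cdots n_m$ in normalizers forces $2k$ consecutive hyperplanes of the axis to cross a geodesic $[hA_{X_i},hn_iA_{X_i}]$ that is pointwise fixed by $hA_{X_i}h^{-1}$, contradicting the no-common-stabilizer property via Remark~\ref{remark:Deligne hyperlanes}. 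If you want to salvage your tree-based route, you would need both to replace $st$ by such a rank-one element and to prove the acylindricity bound along its axis --- at which point you have essentially reconstructed Lemma~\ref{lem: rank one}, so the gap is real and not cosmetic.
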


Note that in their paper \cite{CalvezWiest2019}, Calvez and Wiest show that the complex $\CPG$ has infinite diameter in the case where $\AG$ is finite type and irreducible. To do this they show that $\CPG$ is quasi isometric to the Cayley graph on the group $\AG$ constructed with the generating set 
\[\mathcal{N}=\{g\in \AG| gA_Xg^{-1}=A_X \text{ for some proper finite type irreducible $A_X$}\}\]

The set $\mathcal{N}$ contains the standard generating set $S$ and so is a generating set for the group. 
Using Lemma \ref{thrm: Define Complex}, Calvex and Wiest's proof that these spaces are quasi-isometric works in the FC type case. 

\begin{lemma} \label{lem:Quasi-Iso}
Suppose that $\AG$ is an irreducible FC type Artin group. Further suppose that $\CPG$ is connected.  Then the complex of parabolic subgroups, $\CPG$, and  $\Cayley$, the Cayley graph built with the generating set $\mathcal{N}$, are quasi-isometric. 
\end{lemma}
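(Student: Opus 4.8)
The plan is to follow Calvez and Wiest's strategy by constructing an explicit $\AG$-equivariant quasi-isometry from $\Cayley$ to $\CPG$ rather than appealing to the Milnor--\v{S}varc lemma (the conjugation action on $\CPG$ is not proper). Fix once and for all a proper finite type irreducible standard parabolic $A_X$, viewed as a basepoint vertex $v_0\in\CPG$, and define $\Phi$ on vertices by $\Phi(g)=gA_Xg^{-1}$. Since $\Phi(hg)=h\,\Phi(g)\,h^{-1}$, this map intertwines left multiplication on $\Cayley$ with the conjugation action on $\CPG$, so by equivariance every distance estimate reduces to comparing the word length $d_{\mathcal{N}}(1,g)$ with the edge-path distance $d_{\CPG}(v_0,gv_0)$. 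Because $\CPG$ is connected (the standing hypothesis) and the standard parabolic vertices form a finite set, there is a uniform bound $D$ with $d_{\CPG}(A_Z,A_X)\le D$ for every proper finite type irreducible standard parabolic $A_Z$. Applying an element $g$ gives $d_{\CPG}(gA_Zg^{-1},\Phi(g))\le D$, and since every vertex of $\CPG$ has the form $gA_Zg^{-1}$, this shows $\Phi$ is coarsely surjective.

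For the Lipschitz (upper) bound I would first show that a single generator $n\in\mathcal{N}$ moves the basepoint boundedly. By definition $n$ normalizes some proper finite type irreducible standard parabolic $A_Y$, hence fixes the vertex $A_Y$ under conjugation. Concatenating a geodesic from $A_X$ to $A_Y$ (length $\le D$) with its $n$-translate, which runs from $A_Y=nA_Yn^{-1}$ to $nA_Xn^{-1}$, yields $d_{\CPG}(v_0,nv_0)\le 2D$. Telescoping along a word $g=n_1\cdots n_k$ and using equivariance then gives $d_{\CPG}(v_0,gv_0)\le 2D\,d_{\mathcal{N}}(1,g)$.

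The reverse inequality is the crux. Given an edge geodesic $v_0=P_0,P_1,\dots,P_m=gv_0$ in $\CPG$, I would use Lemma \ref{thrm: Define Complex}, condition $(3)$, to choose for each edge $\{P_{j-1},P_j\}$ a single common conjugator $c_j\in\AG$ and subsets $X_j,Y_j\subset S$ with $P_{j-1}=c_jA_{X_j}c_j^{-1}$ and $P_j=c_jA_{Y_j}c_j^{-1}$. At each interior vertex the two descriptions $c_jA_{Y_j}c_j^{-1}=c_{j+1}A_{X_{j+1}}c_{j+1}^{-1}$ coincide; invoking the well-definedness of the type of a parabolic subgroup forces $Y_j=X_{j+1}$ (both equal to the type of the irreducible finite type subgroup $P_j$), so that $c_{j+1}^{-1}c_j$ normalizes a proper finite type irreducible standard parabolic and hence lies in $\mathcal{N}$. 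Since the two endpoint descriptions involve $A_X$ itself, the elements $c_1$ and $g^{-1}c_m$ also lie in $\mathcal{N}$; telescoping the inclusions $c_{j+1}\in c_j\mathcal{N}$ expresses $g$ as a product of at most $m+1$ elements of $\mathcal{N}$, giving $d_{\mathcal{N}}(1,g)\le d_{\CPG}(v_0,gv_0)+1$. Combining the three estimates shows $\Phi$ is a quasi-isometry.

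The main obstacle I anticipate lies in this last paragraph: gluing the locally chosen conjugators into a coherent sequence and, in particular, guaranteeing that each transition element $c_{j+1}^{-1}c_j$ genuinely \emph{normalizes} a standard parabolic rather than merely transporting one standard parabolic to another. This rests on the well-definedness of parabolic type in the FC setting, which I would extract from the same normalizer and ribbon results of Godelle already invoked for Lemma \ref{lem: conj by centers}; should only a transporter between two standard parabolics be available, one would need the separate argument that such ribbon elements are themselves bounded-length words in $\mathcal{N}$.
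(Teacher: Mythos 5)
Your equivariance, coarse surjectivity, and upper (Lipschitz) bounds are sound and follow the same route the paper intends: the paper gives no proof of its own here, but simply asserts that Calvez and Wiest's finite type argument goes through once Lemma \ref{thrm: Define Complex} is available, and your first two paragraphs are a faithful reconstruction of that scheme. The problem is the lower bound, and it is exactly where you yourself suspect it is, so let me confirm that the suspicion is fatal as written. The step ``invoking the well-definedness of the type of a parabolic subgroup forces $Y_j=X_{j+1}$'' is false: conjugate irreducible standard parabolic subgroups need not have equal defining subsets of $S$. Already in the $3$-strand braid group $\langle s_1,s_2\rangle$ one has $\Delta A_{\{s_1\}}\Delta^{-1}=A_{\{s_2\}}$ with $\Delta=s_1s_2s_1$, so at an interior vertex of your geodesic the element $c_{j+1}^{-1}c_j$ is in general only a \emph{transporter} between two distinct standard parabolics, not a normalizer, and the same issue already occurs at the two endpoints (nothing forces $X_1=X$ or $Y_m=X$). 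Worse, a transporter need not lie in $\mathcal{N}$ at all: in the example above $\Delta$ swaps the only two proper irreducible standard parabolics and hence normalizes neither, so $\Delta\notin\mathcal{N}$. What is well defined by the Paris--Godelle theory is the type up to the equivalence generated by ribbons, not the literal subset of $S$.

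Your proposed fallback --- that such ribbon/transporter elements are uniformly bounded-length words in $\mathcal{N}$ --- is the correct repair, but it is precisely the nontrivial content that your proof is missing, not a routine appendix. One must invoke Godelle's decomposition of any element conjugating $A_Y$ to $A_Z$ as (normalizer)$\cdot$(product of elementary ribbons) in the FC type setting, and then check that each elementary ribbon, which roughly has the form $\Delta_U\Delta_Y^{-1}$ for $U=Y\cup\{t\}$, decomposes into boundedly many elements of $\mathcal{N}$; this requires separate care when $A_U$ is reducible or equal to $A_S$, and the bound must be uniform over the finitely many standard types for the telescoping to yield a quasi-isometry constant. Every element of $\AG$ is trivially a product of elements of $\mathcal{N}$ (since $S\subseteq\mathcal{N}$), so only a \emph{uniform} bound on transporters has any force. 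Until that lemma is stated and proved, the inequality $d_{\mathcal{N}}(1,g)\le d_{\CPG}(v_0,gv_0)+1$ (or any affine version of it) is not established, and the quasi-isometry claim is open on one side.
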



In order to prove Theorem \ref{thrm:inf_diam}, we also need the following technical lemma. This construction originally appeared in work by the author with Ruth Charney (see \cite{Charney2018} Lemmas 2.6 and 4.3). 

We say that an element $g$ act \textit{loxodromically} on a space if there is a bininfinite geodesic $\ell_g$ such that $g$ acts by translation along this geodesic. This geodesic is called the \textit{axis} of $g$. 

\begin{lemma} (\cite{Charney2018} Lemmas 2.6 and 4.3\label{lem: rank one})
Suppose that the defining graph of the Artin group $\G$ is a graph that is not a join and has at least two vertices. Then there is an element $g\in \AG$ and a constant $k$ such that

\begin{itemize}
\item  The element $g$ acts loxodromically on the Deligne complex and has axis $\ell_g$. 
\item No two hyperplanes which cross $\ell_g$ cross eachother. 
\item For any point $x \in \ell_g$, there are $2k$ hyperplanes crossing the segment of $\ell_g$ from $x$ to $gx$ and these hyperplanes cannot all be fixed by that same nontrivial element. 
\end{itemize}

\end{lemma}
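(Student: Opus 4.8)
The plan is to prove this by exhibiting an explicit element $g$ and verifying the three bullet points in turn, leaning on the structure theory for Artin groups whose defining graph is not a join. The natural candidate for $g$ is a word built from the generators that "rotates" through the graph in a way that forces its axis to cross a controlled, non-interacting family of hyperplanes. Since the lemma is attributed to earlier work of the author and Charney (\cite{Charney2018}, Lemmas 2.6 and 4.3), I would structure the argument to mirror that source: first construct a suitable loxodromic element using the fact that $\G$ is not a join, then analyze the combinatorics of the hyperplanes its axis crosses.

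First I would produce the loxodromic element. The key input is that $\G$ is not a join: this means $S$ cannot be partitioned into two nonempty sets $S_1 \sqcup S_2$ with every $s_1 \in S_1$ adjacent (by an edge) to every $s_2 \in S_2$. Equivalently, the complement graph of $\G$ is connected, which gives a pair of non-adjacent vertices $s, t$ (with $m_{st} = \infty$) lying in a common "non-join witnessing" configuration. I would use such a non-commuting, non-adjacent pair to build an element $g$ (roughly a product like $st$ or a longer alternating word dictated by the structure of $\G$) whose action on the CAT(0) Deligne complex has no fixed point and positive translation length; standard CAT(0) cube complex theory then promotes this to a loxodromic element with a combinatorial geodesic axis $\ell_g$ lying in the $1$-skeleton, establishing the first bullet.

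Next I would analyze the hyperplanes crossing $\ell_g$. For the second bullet, I would use Remark \ref{remark:Deligne hyperlanes}: two hyperplanes of types $s$ and $t$ can cross only when $s$ and $t$ are adjacent in $\G$. By choosing $g$ so that its axis alternates between hyperplane types that are pairwise \emph{non-adjacent} in $\G$ (which is exactly what the non-join hypothesis lets me arrange), I would guarantee that no two hyperplanes crossing $\ell_g$ can cross each other. For the third bullet, I would count the hyperplanes crossed by the fundamental segment from $x$ to $gx$, set $2k$ to be that number, and argue that they cannot all be fixed by a single nontrivial element: an element fixing a hyperplane of type $s$ and one of type $t$ with $s,t$ non-adjacent would have to lie in the intersection of the corresponding parabolic stabilizers, and by the intersection result (Theorem \ref{Thrm:Intersections}) together with van der Lek's $A_R \cap A_T = A_{R\cap T}$, this intersection is forced down to a parabolic subgroup so small (generated by the empty or a disjoint non-commuting set) that it contains no nontrivial element fixing all $2k$ hyperplanes simultaneously.

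The main obstacle will be the third bullet: constructing $g$ so that its axis crosses hyperplanes spread across the graph in a way that no nontrivial element can fix all of them at once. This requires genuinely using the non-join condition to rule out a "thin" axis whose crossing hyperplanes all share a common stabilizing element, and it is where the CAT(0) cube geometry of the Deligne complex interacts most delicately with the combinatorics of $\G$. The first two bullets are comparatively routine CAT(0)/hyperplane-type bookkeeping once the right $g$ is in hand, so I would invest the bulk of the effort in pinning down the explicit element and its translation length, then verifying the non-common-fixing property via the parabolic intersection machinery established earlier in the paper.
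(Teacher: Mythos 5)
First, note that the paper itself does not prove this lemma: it is imported verbatim from \cite{Charney2018} (Lemmas 2.6 and 4.3), so the only ``proof'' in the source is the citation. Your proposal is therefore being measured against the construction in that earlier paper, and as written it has a genuine gap rather than a complete alternative argument.

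The gap is precisely at the point you flag as ``the main obstacle,'' the third bullet, and the mechanism you propose for it does not work. You suggest that an element fixing a type-$s$ and a type-$t$ hyperplane ``would have to lie in the intersection of the corresponding parabolic stabilizers,'' to be killed by Theorem \ref{Thrm:Intersections} and van der Lek. But the stabilizer of a hyperplane in the Deligne complex is not a parabolic subgroup -- it is the stabilizer of a union of midcubes, which is strictly larger than the stabilizer of any single dual edge and is not of the form $gA_Tg^{-1}$ -- so the parabolic-intersection machinery does not apply to it. Moreover, a single non-adjacent pair $s,t$ is not enough: if some generator $u$ is adjacent to both $s$ and $t$ in $\G$, then $u$ can fix hyperplanes of both types, so an axis for $g=st$ may well have all of its crossing hyperplanes fixed by a common nontrivial element. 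The actual construction in \cite{Charney2018} uses the full strength of the non-join hypothesis (connectivity of the complement graph) to build $g$ as a product along a closed walk in the complement visiting \emph{every} vertex of $\G$, so that one period of the axis crosses hyperplanes of every type and a common fixer is forced into a trivial intersection. You gesture at ``a longer alternating word dictated by the structure of $\G$'' but never pin it down, which is exactly where the content of the lemma lives. Finally, the second bullet is also not routine: choosing consecutive letters non-adjacent in $\G$ only prevents \emph{consecutive} hyperplanes from crossing, whereas the lemma asserts that no two hyperplanes crossing $\ell_g$ cross, including pairs far apart along the axis whose types may be adjacent; this requires an additional argument.
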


We are now ready to prove Theorem \ref{thrm:inf_diam}. 
\begin{proof}
First we may assume that $\CPG$ is connected, as otherwise it will clearly have infinite diameter. If $\CPG$ is connected, by Lemma \ref{lem:Quasi-Iso} it is sufficient to show that $\Cayley$ has infinite diameter. 

 Let $g$  and $k$ be as in Lemma \ref{lem: rank one}. We will show that the path in $\Cayley$ from the identity to the element $g^n$ has length greater than $m=\frac{2kn}{2k+2}$. Thus by increasing $n$ we may obtain elements of arbitrarily large length, showing that the Cayley graph has infinite diameter.

Suppose for contradiction that there is a word $g^n=n_1\dots n_{m}$ where each $n_i$ is an element of $\mathcal{N}$ and there is some finite type irreducible $A_{X_i}$ such that $n_iA_{X_i}n_i^{-1}=A_{X_i}$. We will use the Deligne complex to derive a contradiction to this scenario showing that $g^n$ has word length greater than $m$ in the generating set $\mathcal{N}$.

In the Deligne complex consider the edge path, $\ell_g$ described in Lemma \ref{lem: rank one}. This is the axis for the loxodromic element $g$. 
\begin{figure}
\centering
\begin{tikzpicture}[ thick, scale=.9]


  						 \draw[<->] (-2,0)--(12,0);
    								
    					\draw	 (0,0)--(1,-1.5);
    					\draw (1,-1.5)--(2,-2.5);
    					\draw (3,-3)--(7,-3);
    					\draw	 (10,0)--(9,-1.5);
    					\draw (9,-1.5)--(8,-2.5);
    					
    					\draw (3,-3)--(3,-4);
    					\draw (3,-4)--(7,-4);
    					\draw (7,-4)--(7,-3);
    					
    					\draw(2.4,-2.75) node[label={[rotate=-30]$\dots$}]{};
    					
    					\draw(7.7,-2.8) node[label={[rotate=30]$\dots$}]{};
    						

		\draw (0,0) node[vertex,label={[shift={(0.05,0.05)}]$A_\emptyset$}]{};	
		\draw (1,-1.5) node[vertex,label={[shift={(-0.6,-0.2)}]$n_1A_\emptyset$}]{};		
		\draw (2,-2.5) node[vertex,label={[shift={(-.8,-.2)}]$n_1n_2A_\emptyset$}]{};	
		\draw (3,-3) node[vertex,label={[shift={(0.1,0)}]$hA_\emptyset$}]{};
		\draw (7,-3) node[vertex,label={[shift={(-0.2,0)}]$hn_{i}A_\emptyset$}]{};
		\draw (8,-2.5) node[vertex]{};
		\draw (9,-1.5) node[vertex,label={[shift={(1.3,-0.2)}]$n_1\dots n_{m-1}A_\emptyset$}]{};
		\draw (10,0) node[vertex,label={[shift={(0,0.05)}]$g^nA_\emptyset$}]{};
		
		\draw (3.9,0) node[vertex,label={[shift={(0,0.05)}]$x$}]{};
		\draw (5.75,0) node[vertex,label={[shift={(0,0.05)}]$gx$}]{};

		\draw (3,-4) node[vertex,label={[shift={(-0.2,-0.6)}]$hA_{X_i}$}]{};	
		\draw (7,-4) node[vertex,label={[shift={(.3,-0.6)}]$hn_{i}A_{X_i}$}]{};

		\draw (-2,0.3) 	node{\large $\ell_g$};
			 
	    \draw[dashed,<->] (4.2,1)--(4.2,-5); 
	      \draw[dashed,<->] (4.6,1)--(4.6,-5); 
	        \draw[dashed,<->] (5,1)--(5,-5); 
	          \draw[dashed,<->] (5.4,1)--(5.4,-5); 
	          
	          \draw[dashed, <->] (3.7,1)--(3.7,-2.5) arc (0:-90:1.3);
	          
	          \draw[dashed, <->] (6,1)--(6,-2.5) arc (180:270:1.2)--(8,-3.7);
		
		\draw [decorate,decoration={brace,amplitude=10pt},xshift=0pt,yshift=0pt]
(3.9,1) -- (5.75,1) node[shift={(1.5,.5)}]{$2k$ hyperplanes between $x$ and $gx$};			
				\end{tikzpicture}

\caption{In the proof of Theorem \ref{thrm:inf_diam}, there are at least $2k+2$ hyperplanes in a row crossing both $\ell_g$ and $[hA_\emptyset,hn_iA_\emptyset]$. This means there are at least $2k$ hyperplanes crossing $[hA_{X_i},hn_iA_{X_i}]$. }
\end{figure}
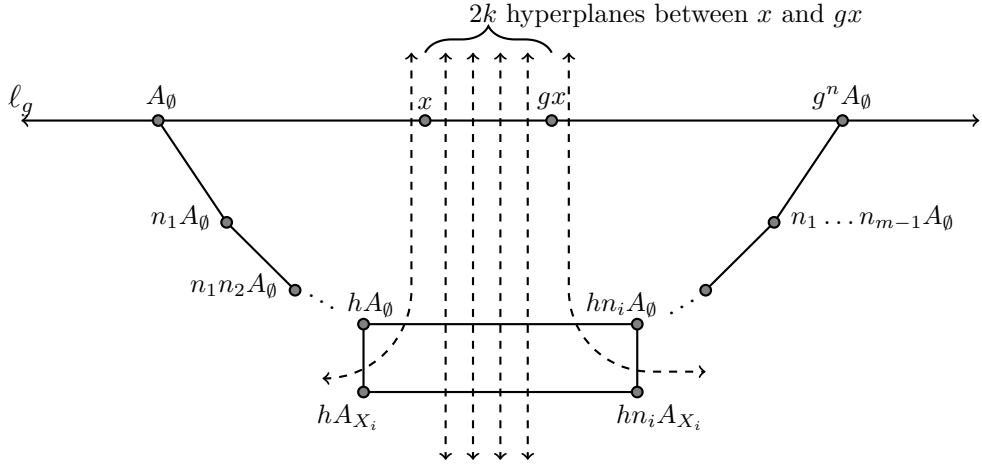
Now consider the geodesic polygon outlined by $\ell_g$ from $A_\emptyset$ to $g^nA_\emptyset$ and the geodesic segments  $[A_\emptyset, n_1A_\emptyset]$, $[n_1A_\emptyset,n_1n_2A_\emptyset],\dots, [n_1\dots n_{m-1}A_\emptyset,g^nA_\emptyset]$. The hyperplanes crossing $\ell_g$ all enter this polygon so they must exit this polygon, in the same order they entered, through one of the above geodesic segments.

There are $2kn$ hyperplanes crossing the segment $\ell_g$ between $A_\emptyset$ and $g^nA_\emptyset$ and there are $m$ segments each corresponding to $n_i$ for $1\leq i \leq m$ with $m=\frac{2kn}{2k+2}$ so there is some segment $[n_1n_2\dots n_{i-1}A_\emptyset, n_1n_2\dots n_{i}A_\emptyset]$ which is intersected by at least $2k+2$ of the hyperplanes intersecting $\ell_g$. 

Let $h=n_1\dots n_{i-1}$, and consider the geodesic rectangle formed by $hA_\emptyset,hA_{X_i},hn_iA_\emptyset,$ and $hn_iA_{X_i}$. Of the hyperplanes intersecting $\ell_g$, at least $2k+2$ of them enter this rectangle and at most two of them exit through the segments $[hA_\emptyset,hA_{X_i}]$ and $[hn_iA_\emptyset,hn_iA_{X_i}]$. This means, of the hyperplanes crossing $\ell_g$, there must be $2k$ in a row that intersect the geodesic segment $[hA_{X_i},hn_iA_{X_i}]$. 

The Deligne complex is CAT(0), and $n_i$ is a normalizer of $A_{X_i}$ so the geodesic segment $[hA_{X_i},hn_iA_{X_i}]$ is fixed pointwise by the parabolic subgroup $hA_{X_i}h^{-1}$. If a point on a hyperplane $H$ is fixed by an element of the group, then this element must take hyperplane $H$ to a hyperlane which intersects H, and thus must fix $H$ as a hyperplane (see Remark \ref{remark:Deligne hyperlanes}).   This means that any hyperplane crossing $[hA_{X_i},hn_iA_{X_i}]$ is also fixed as a hyperplane by $hA_{X_i}h^{-1}$. 

However, in Lemma \ref{lem: rank one}, we constructed $g$ so that any $2k$ consecutive  hyperplanes are only are fixed by the trivial element. This contradicts our assumption that $n_i$ was a normalizer. This  shows that this construction is impossible and $g^n$ has length greater than $m$ in $\Cayley$ as desired.
\end{proof}

We conjecture that $\CPG$ is infinite diameter for all irreducible FC type $\AG$. Calvez and Weist show this for all finite type Artin groups and we have shown this for all FC type Artin groups where the defining graph $\G$ is not a join. Thus it remains only to show this for irreducible infinite type Artin groups whose defining graph is a join. 

We also suspect that the complex is hyperbolic. 

\begin{conjecture}
If $\AG$ is an irreducible non-cyclic FC type Artin group, then $\CPG$ is an infinite diameter hyperbolic space. 
\end{conjecture}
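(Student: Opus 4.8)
The plan is to prove $\delta$-hyperbolicity of $\CPG$ by passing to the quasi-isometric model $\Cayley$ furnished by Lemma \ref{lem:Quasi-Iso}, since hyperbolicity is a quasi-isometry invariant of geodesic spaces (we may assume $\CPG$ is connected by Lemma \ref{lem_connected}, the remaining cases reducing to the connected factors). First I would, for each pair of finite type irreducible parabolics $P$ and $Q$, define a family of \emph{preferred paths} coming from CAT(0) geodesics in the Deligne complex: take the unique geodesic between chosen fixed vertices $v_P$ and $v_Q$, pass to a minimal edge path as in the proof of Theorem \ref{Thrm:Intersections}, and read off the sequence of proper irreducible finite type parabolics whose fixed sets the path crosses. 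The aim is to verify Bowditch's \emph{guessing geodesics} criterion, equivalently the Masur--Minsky thin-triangles criterion, which certifies hyperbolicity once one checks that consecutive subgroups along a preferred path are uniformly close in $\CPG$, that preferred paths are coarsely symmetric and stable under small changes of endpoints, and that for any triple of vertices the path on one side lies in a uniformly bounded neighborhood of the union of the other two sides.

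The engine for the thin-triangle estimate should be a \emph{subsurface-projection} analogue. For each proper $T\subset S$ with $A_T$ irreducible finite type and each coset $gA_T$, I would define a coarse projection $\pi_T$ from $\CPG$ (minus a neighborhood of the star of $gA_Tg^{-1}$) into the sub-complex $\mathcal{CP}_{A_T}$, using the CAT(0) nearest-point projection of the Deligne complex onto the convex subcomplex $Fix(gA_Tg^{-1})$ together with the minimal-parabolic operation of Corollary \ref{thrm: Minimal_subgroup}, which is made coherent precisely by the intersection theorem (Theorem \ref{Thrm:Intersections}). The two facts I would then need are a \emph{Behrstock inequality} (at most one of $\pi_T,\pi_{T'}$ is large when the two pieces link) and a \emph{bounded geodesic image} property (a preferred path staying far from the star of $gA_Tg^{-1}$ has uniformly bounded $\pi_T$-image). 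Granting these, the distance formula and thin triangles follow by the standard curve-complex machinery, yielding $\delta$-hyperbolicity.

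The contracting geometry that should feed these projections is exactly what Lemma \ref{lem: rank one} supplies: the loxodromic $g$ has an axis no two of whose crossing hyperplanes cross each other, a strong rank-one/contracting condition in the CAT(0) Deligne complex. I would leverage this element, together with its translates, to show that projections of CAT(0) geodesics onto the $Fix$-subcomplexes are coarsely Lipschitz with bounded image, transporting the cube-complex contraction estimates of Charney and the author \cite{Charney2018} to the parabolic-projection setting.

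The step I expect to be the genuine obstacle is the \emph{bounded geodesic image} property in this non-proper, non-locally-finite setting. Unlike the finite type case of Calvez--Wiest, there is no global Garside normal form to control how preferred paths enter and leave the sub-complexes $\mathcal{CP}_{A_T}$, so the uniform bound on $\pi_T$ of a far-away geodesic must be extracted purely from CAT(0) convexity of $Fix(gA_Tg^{-1})$ and the hyperplane combinatorics of Remark \ref{remark:Deligne hyperlanes}. A secondary difficulty is that the action of $\AG$ on $\CPG$ is not proper, so the verification cannot appeal to cocompact-plus-proper compactness and must instead be made uniform directly from the geometry of the Deligne complex; the join case left open after Theorem \ref{thrm:inf_diam} will likely demand a separate argument, since there the hyperplane-crossing hypothesis of Lemma \ref{lem: rank one} fails.
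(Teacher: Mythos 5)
This statement is a conjecture in the paper: the author explicitly leaves it open, offering no proof, and identifies hyperbolicity of $\CPG$ as ``the major challenge for research moving forward in this area.'' So there is no proof in the paper to compare yours against, and your proposal does not close the gap either --- it is a research program, not a proof. You candidly flag the two hardest steps yourself, and those are exactly where the argument is missing rather than merely deferred. First, your coarse projections $\pi_T$ are only described, not constructed: you would need to show that composing CAT(0) nearest-point projection onto $Fix(gA_Tg^{-1})$ with the minimal-parabolic operation of Corollary \ref{thrm: Minimal_subgroup} lands in $\mathcal{CP}_{A_T}$ in a coarsely well-defined, equivariant, coarsely Lipschitz way, and none of the paper's results (Theorem \ref{Thrm:Intersections}, Lemma \ref{thrm: Define Complex}) supply the required uniform constants. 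Second, the Behrstock inequality and bounded geodesic image property are asserted as ``the two facts I would then need'' --- but these \emph{are} the theorem; in the mapping class group setting they are deep results of Masur--Minsky and Behrstock, and no analogue is known here. Lemma \ref{lem: rank one} gives a single contracting axis, which suffices for the infinite-diameter lower bound of Theorem \ref{thrm:inf_diam}, but it does not give the uniform contraction of \emph{all} $Fix$-subcomplexes that your projection machinery requires.

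There is also a scope problem: the conjecture covers all irreducible non-cyclic FC type Artin groups, but Lemma \ref{lem: rank one} (and hence Theorem \ref{thrm:inf_diam}) requires that $\G$ not be a join, and Lemma \ref{lem_connected} shows $\CPG$ is disconnected for dihedral Artin groups and free products. Your plan to ``reduce to connected factors'' does not address the irreducible-join case at all, where even infinite diameter is still open --- so even the ``infinite diameter'' half of the conjecture is not established by your outline in full generality. In short: the strategy you sketch is a reasonable and standard template imported from curve-complex theory, and it is broadly consistent with the direction the paper gestures toward, but every load-bearing step remains unproven, so this cannot be accepted as a proof of the conjecture.
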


If this conjecture were true it would be the first step towards establishing a hierarchically hyperbolic structure for Artin groups, similar to the curve complex for mapping class groups. This space is a good candidate for this kind of structure because of the way the complex $\mathcal{CP}_{A_T}$ is related to the complex $\CPG$ when $A_T$ is a subgroup of $\AG$. If $A_{T}\subsetneq \AG$ is an irreducible finite type subgroup then for any vertex $gPg^{-1}$ for $P$ some irreducible finite parabolic subgroup in $A_{T}$, $gPg^{-1}$ is connected by an edge to $gA_{T}g^{-1}$, so the copies of $\mathcal{CP}_{A_T}$ inside $\CPG$ are coned off. This means that, provided we could show $\CPG$ was hyperbolic, we could use $\CPG$ to isolate the behavior of the group that is not confined to a proper parabolic subgroup.

 However, showing that the space is hyperbolic is more difficult. This is the major challenge for research moving forward in this area.


\bibliographystyle{plain}
\bibliography{../Thesis/library}

 \footnotesize

  R.~Morris-Wright, \textsc{Department of Mathematics, Brandeis University}\par\nopagebreak
  \textit{E-mail address}: \texttt{rmorriswright@brandeis.edu}

\end{document}